\def\A{\mathbb{A}}
\def\C{\mathbb{C}}
\def\R{\mathbb{R}}
\def\N{\mathbb{N}}
\def\P{\mathbb{P}}
\def\Q{\mathbb{Q}}
\def\Z{\mathbb{Z}}
\def\Res {{\rm Res}}
\def\dist {{\rm dist}}
\def\bm{\boldsymbol}
\def\abs#1{\mathopen| #1 \mathclose|}	
\newtheorem{defn}{Definition}
\newtheorem{lemma}[defn]{Lemma}
\newtheorem{proposition}[defn]{Proposition}
\newtheorem{theorem}[defn]{Theorem}
\newtheorem{remark}[defn]{Remark}
\newtheorem{notation}[defn]{Notation}
\newtheorem{example}[defn]{Example}
\newenvironment{proof}[1]{
  \trivlist \item[\hskip \labelsep{\it #1}]}{\hfill\mbox{$\square$}
  \endtrivlist}
\newcommand{\Hom}{h}
\title{On the minimum of a polynomial function on a basic closed semialgebraic set and applications}
\author{Gabriela Jeronimo$^{\diamondsuit,}$\footnote{Partially supported by the following grants: PIP 099/11 CONICET and UBACYT 20020090100069 (2010/2012).}, Daniel Perrucci$^{{\diamondsuit},*}$, Elias Tsigaridas$^{\sharp,}$%
\footnote{Partially supported by an individual postdoctoral grant from the
Danish Agency for Science, Technology and Innovation,
and also acknowledges support from the Danish National Research Foundation and
the National Science Foundation of China (under the grant 61061130540)
for the Sino-Danish Center for the Theory of Interactive Computation,
within which part of this work was performed.}
\\[3mm]
{\small ${\diamondsuit}$ Departamento de Matem\'atica, Facultad de Ciencias Exactas y Naturales, Universidad de Buenos Aires}\\ {\small and IMAS, CONICET}\\
{\small $\sharp$ Computer Science Department, Aarhus University}
}
\begin{document}

\maketitle

\begin{abstract}
We give an explicit upper bound for the algebraic degree and an explicit lower bound for the absolute value of the minimum of a polynomial function on a compact connected component of a basic closed semialgebraic set when this minimum is not zero. As an application, we obtain a lower bound for the separation of two disjoint connected components of basic closed semialgebraic sets, when at least one of them is compact.
\end{abstract}

\section{Introduction}

Let $T\subset \R^n$ be a basic closed semialgebraic set defined by polynomials with integer coefficients and let $C$ be a compact connected component of $T$.
The first aim of this work is to find bounds $\delta>0$ and $b >0$ such that if the minimum value that a polynomial $g\in \Z[x_1,\dots, x_n]$ takes over $C$ is not zero, then it is an algebraic number of degree at most $\delta$ and its absolute value is greater or equal to $b$. We look for explicit bounds $\delta $ and $b$ in terms of the number of variables, the number of polynomials defining $T$ and  given upper bounds for the degrees and coefficient size of these polynomials and $g$.
Such explicit bounds are of fundamental importance in the complexity analysis of symbolic and numerical methods for optimization and polynomial system solving (see, for instance, \cite{Bajaj}).

A standard technique to handle optimization problems with inequality constraints  is to use the Karush-Kuhn-Tucker conditions (see \cite[Chapter 12]{NW}). In \cite{NR}, this approach is combined with deformation techniques  to obtain the algebraic degree of the minimizers in polynomial optimization over subsets of $\R^n$ defined by at most  $n$  polynomials under certain non-degeneracy assumptions.

In this paper, we consider the minimization problem for an arbitrary family of polynomial constraints. Since the system which gives the critical points for $g$ on $T$ may not satisfy the required hypothesis or may provide us with an infinite set of possible minimizers, we  use the deformation techniques in \cite{JPS10} which follow the spirit of \cite[Chapter 13]{BPR}. The deformation enables us to deal with `nice' systems which, in the limit, define a finite set of minimizing points. A careful analysis of the perturbed systems combined with resultant-based estimations relying on \cite{Sombra04} leads us to the explicit bounds (see  \cite {BR10}, \cite{EMT10}, \cite{HKL+} and \cite{JP10} for similar applications of these techniques).
Our main result is the following:

\begin{theorem}\label{minimum}
Let $T = \{x \in \R^n \ | \ f_1(x) = \dots = f_{l}(x) = 0, f_{l + 1}(x) \ge  0, \dots, f_{m}(x) \ge  0\}$ be defined by polynomials $f_1,\dots, f_m \in \Z[x_1,\dots, x_n]$ with degrees bounded by an even integer $d$ and coefficients of absolute value at most $H$, and let $C$ be a compact connected component of $T$. Let $g\in \Z[x_1,\dots, x_n]$ be a polynomial of degree $d_0\le d$ and coefficients of absolute value bounded by $H_0\le H$.
Then, the minimum value that $g$ takes over $C$ is a real algebraic number of degree at most
$$2^{n-1} d^n $$
and,
if it is not zero, its absolute value is greater or equal to
\begin{equation}\label{bigbound}
(2^{4-\frac{n}2}\tilde H d^n)^{-n2^nd^n},
\end{equation}
where $\tilde H =  \max\{H, 2n+2m\}$.
\end{theorem}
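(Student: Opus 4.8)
The strategy is to describe the minimum $g^{*}:=\min_{x\in C}g(x)$ as a limit, along a deformation parameter $\varepsilon\to 0^{+}$, of critical values of $g$ on a family of well-conditioned zero-dimensional systems, to bound the number of branches of this family (which controls the algebraic degree) and their arithmetic complexity (which controls the absolute value), and finally to invoke a Cauchy-type estimate for the roots of an integer polynomial. Since $C$ is compact, $g$ attains the value $g^{*}$ at some $x^{*}\in C$, which is a Karush--Kuhn--Tucker point: there is a set $J\subseteq\{l+1,\dots,m\}$ of inequality constraints active at $x^{*}$ such that $x^{*}$ belongs to $V_{J}:=\{f_{1}=\dots=f_{l}=0\}\cap\bigcap_{j\in J}\{f_{j}=0\}$ and $\nabla g(x^{*})$ lies in the span of the gradients of the defining equations of $V_{J}$, i.e.\ $x^{*}$ is a critical point of $g$ restricted to $V_{J}$. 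Since $V_{J}$ may be singular, of the wrong dimension, or carry an infinite set of such critical points, and since $J$ is a priori unknown, I would apply the deformation technique of \cite{JPS10} (in the spirit of \cite[Chapter~13]{BPR}): introduce a single parameter $\varepsilon$ and build a parametrized family $\Sigma_{\varepsilon}$ of polynomial systems in the $n$ variables $x$, of degrees $O(d)$ and with integer coefficients whose absolute values are controlled by $\tilde H=\max\{H,2n+2m\}$ (the term $2n+2m$ absorbing the combinatorial constants --- number of variables and constraints --- produced by the construction and the Lagrange/rank conditions), so that for all small $\varepsilon>0$ the system $\Sigma_{\varepsilon}$ is zero-dimensional and regular, every KKT point of the original problem (in particular $x^{*}$) is the limit as $\varepsilon\to 0^{+}$ of a branch of $\Sigma_{\varepsilon}$ staying in a fixed bounded region, and the evenness of $d$ is used in the regularisation. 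Compactness of $C$ is essential here, as it is what keeps the relevant branch from escaping to infinity in the limit.

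To obtain the degree bound, adjoin to $\Sigma_{\varepsilon}$ the equation $t-g(x)=0$ and eliminate $x_{1},\dots,x_{n}$ --- by iterated resultants or a $u$-resultant --- producing a nonzero polynomial $R(\varepsilon,t)\in\Z[\varepsilon,t]$. Its degree in $t$ is at most the number of branches of $\Sigma_{\varepsilon}$, which the construction keeps within the B\'ezout-type bound for critical points of a function on a well-conditioned hypersurface of degree $d$ in $\R^{n}$, namely $d(2d-1)^{n-1}\le 2^{n-1}d^{n}$. Now $g^{*}=g(x^{*})$ is a limit, as $\varepsilon\to 0^{+}$, of the $t$-coordinates of points of $\Sigma_{\varepsilon}$; hence $g^{*}$ is a root of $R(0,t)$ when this is not identically zero, and otherwise of the polynomial obtained from $R$ by dividing out the maximal power of $\varepsilon$ and setting $\varepsilon=0$. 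In every case $g^{*}$ is a root of a nonzero polynomial $P\in\Z[t]$ of degree at most $2^{n-1}d^{n}$; since $C\subseteq\R^{n}$ it is real, which gives the first assertion.

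For the absolute-value bound, write $P(t)=t^{\nu}\widetilde P(t)$ with $\widetilde P(0)\ne 0$, so that $\widetilde P(g^{*})=0$ whenever $g^{*}\ne 0$, and then control the coefficients of $P$ (hence of $\widetilde P$) by propagating the arithmetic estimates through the elimination: the polynomials of $\Sigma_{\varepsilon}$ have degree $O(d)$ and coefficients bounded in terms of $\tilde H$ and of binomial factors coming from the minors and the perturbation, and the arithmetic B\'ezout / resultant height bounds of \cite{Sombra04}, applied to the elimination of the $n$ variables $x$ together with the slice $t-g(x)$, yield a bound of the form $\max_{i}\abs{P_{i}}\le (2^{4-\frac n2}\tilde H d^{n})^{n2^{n}d^{n}}-1$; here the exponent $n2^{n}d^{n}$ matches the number of polynomials entering the elimination (at most about $n$) times the B\'ezout number $(2d)^{n}$ coming from their degrees, while the factor $2^{4-n/2}$ collects the remaining constants. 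Finally, the classical Cauchy bound --- if $\alpha\ne 0$ is a root of $a_{D}t^{D}+\dots+a_{0}\in\Z[t]$ with $a_{0}\ne 0$ then $\abs{\alpha}\ge\abs{a_{0}}/(\abs{a_{0}}+\max_{i}\abs{a_{i}})\ge(1+\max_{i}\abs{a_{i}})^{-1}$ --- applied to $\widetilde P$ gives
$$\abs{g^{*}}\ \ge\ \bigl(2^{4-\frac n2}\tilde H d^{n}\bigr)^{-n2^{n}d^{n}},$$
as claimed.

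The crux is not the geometry but the arithmetic bookkeeping. The deformation of \cite{JPS10} has to be arranged so as to simultaneously keep the $x$-degrees of size $O(d)$, keep the $\varepsilon$-degrees and the coefficient heights under control, and guarantee that no KKT point --- in particular the true minimiser --- is lost in the limit, all while avoiding the combinatorial blow-up that a naive case analysis over the $2^{m-l}$ possible active sets $J$ would cause; the degenerate case $R(0,t)\equiv 0$ must also be handled. Fitting all the constants produced by the rank/Lagrange conditions and the resultant computation into the compact expression $2^{4-\frac n2}\tilde H d^{n}$ with exponent $n2^{n}d^{n}$ is precisely where the estimates of \cite{Sombra04} must be invoked with care.
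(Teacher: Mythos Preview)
Your plan follows the paper's own route closely: deform via \cite{JPS10}, pass to Lagrange/KKT systems, eliminate to get an integer polynomial vanishing at $g^{*}$, bound its height using \cite{Sombra04}, and finish with a Cauchy bound. Two technical points are sharper in the paper than in your sketch and are worth flagging.

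First, the multipliers must be treated projectively. The paper places $(\lambda_0,(\lambda_i)_{i\in S})\in\P^{\#S}$ and eliminates \emph{both} groups $(x_0,x)$ and $(\lambda_0,\lambda)$ by a bihomogeneous resultant, not an iterated or $u$-resultant in $x$ alone. This matters for the limit $\varepsilon\to 0$: along the deformation the affine multipliers may blow up (so the limiting relation reads $0\cdot\nabla g=\sum\lambda_i\nabla f_i$), and without projectivising one cannot conclude that the limit point lies on the variety you want. It also matters arithmetically: the degree bound $\binom{n}{s}d^{s}(d-1)^{n-s}\le 2^{n-1}d^{n}$ is exactly the bihomogeneous B\'ezout number for one polynomial of bidegree $(d_0,0)$, $s$ of bidegree $(d,0)$ and $n$ of bidegree $(d-1,1)$; your formula $d(2d-1)^{n-1}$, while also below $2^{n-1}d^{n}$, does not match the system actually being eliminated.

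Second, the paper does \emph{not} avoid the enumeration over active sets. It considers all pairs $(S,\sigma)$ with $S\subset\{1,\dots,m\}$, $\sigma\in\{+,-\}^{S}$ (the signs arise because each equality $f_i=0$ is replaced by the pair $F_i^{+}\ge 0$, $F_i^{-}\le 0$), shows via Lemma~\ref{intersections} that only $\#S\le n$ can occur for small $t$, and proves in Proposition~\ref{prop_prin} that the minimiser sits in \emph{one particular} $\Pi_{(t,x)}(V_{S,\sigma})$. Thus a single polynomial $Q_{S,\sigma}$ captures $g^{*}$, and the bounds are for that one polynomial; there is no product over all $3^{m}-1$ systems and hence no $m$-dependence in the exponent. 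Your remark about ``avoiding the $2^{m-l}$ blow-up'' slightly misdescribes the mechanism.

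With these two refinements --- projective multipliers plus bihomogeneous elimination, and the single-$(S,\sigma)$ reduction --- your outline becomes the paper's proof.
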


We also show that the previous result holds for non-compact connected components of $T$ having a compact set of minimizers for $g$ (see Theorem \ref{minimum_noncompact}).

Usually solutions of optimization problems are algebraic numbers,
hence it is natural to study the degree of the minimal polynomial that
defines them \cite{NRS-10}. Our bound for the degree can be seen as
an extension of the result in \cite{NR}. In addition, we present
an explicit lower bound for the absolute value of the
minimum. This bound can be applied, for instance, to get an explicit
upper bound for the degrees in Schm\"udgen's Positivstellensatz (see
\cite[Theorem 3]{Sch}).

A further application of our main result, which is in fact the original motivation of this work, is  an explicit lower bound for the separation between disjoint connected components of basic closed semialgebraic sets. Bounds of this kind can be applied to estimate the running time of numeric algorithms dealing with polynomial equations and inequalities (see, for instance, \cite{MMT11}, \cite{Yak05}).
For isolated points, the problem has already been studied both in the complex and real settings (see, for instance, \cite{Canny}, \cite{EMT10}, \cite{HKL+}).
 Our result, which includes positive dimensional situations, is the following:

\begin{theorem}\label{distance} Let
$T_1 = \{x \in \R^n \ | \ f_1(x) = \dots = f_{l_1}(x) = 0, f_{l_1 + 1}(x) \ge  0, \dots, f_{m_1}(x) \ge  0\}$,  $T_2 = \{x \in \R^n \ | \ g_1(x) = \dots = g_{l_2}(x) = 0, g_{l_2 + 1}(x) \ge  0, \dots, g_{m_2}(x) \ge  0\}$ be defined by polynomials $f_1, \dots, f_{m_1}, g_{1}, \dots, g_{m_2} \in \Z[X_1, \dots, X_n]$
with degrees bounded by an even integer $d$ and coefficients of absolute value at most $H$.
Let $C_1$ be a compact connected component of $T_1$ and $C_2$ a connected component of $T_2$. Then, if $C_1 \cap C_2 = \emptyset$, the distance between $C_1$ and $C_2$ is at least $$ (2^{4-n} \tilde H d^{2n})^{-n2^{2n}d^{2n}}$$ where $\tilde H = \max \{H, 4n+2m_1+2m_2\}$.
\end{theorem}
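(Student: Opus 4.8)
The plan is to reduce Theorem~\ref{distance} to Theorem~\ref{minimum} (in its non-compact version, Theorem~\ref{minimum_noncompact}) applied to the squared Euclidean distance function on a product set. First I would introduce new variables $Y_1,\dots,Y_n$ and work in $\R^{2n}$ with coordinates $(X_1,\dots,X_n,Y_1,\dots,Y_n)$. Regarding each $f_i$ as a polynomial in the $X$-variables and each $g_j$ as a polynomial in the $Y$-variables, the product $T:=T_1\times T_2\subset\R^{2n}$ is the basic closed semialgebraic set defined by the $m_1+m_2$ polynomials $f_1,\dots,f_{m_1},g_1,\dots,g_{m_2}$, which still have degrees bounded by $d$ and coefficients of absolute value at most $H$; the first $l_1+l_2$ of them occur as equalities and the remaining ones as inequalities. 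Since a product of connected sets is connected and $T$ decomposes as the disjoint union of the products of the connected components of $T_1$ and of $T_2$, the set $C:=C_1\times C_2$ is a connected component of $T$.

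Next I would consider the polynomial $g(X_1,\dots,X_n,Y_1,\dots,Y_n)=\sum_{i=1}^n (X_i-Y_i)^2\in\Z[X_1,\dots,X_n,Y_1,\dots,Y_n]$. Since $d$ is a positive even integer we have $d\ge 2$, so $g$ has degree $2\le d$ and coefficients of absolute value at most $1\le H$, and hence it satisfies the hypotheses of Theorem~\ref{minimum} for $T$ viewed in $2n$ variables. The value of $g$ at a point $(x,y)$ is the squared Euclidean distance $\|x-y\|^2$, so the minimum of $g$ over $C=C_1\times C_2$ equals $\dist(C_1,C_2)^2$.

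Then I would verify that Theorem~\ref{minimum_noncompact} is applicable even though $C_2$ need not be compact. Since $C_1$ is compact, $C_2$ is closed, and $C_1\cap C_2=\emptyset$, the distance $\dist(C_1,C_2)$ is positive and is attained. Moreover, fixing $x_0\in C_1$ and letting $R$ be the diameter of $C_1$, every minimizing pair $(x,y)\in C_1\times C_2$ satisfies $\|y-x_0\|\le\|y-x\|+\|x-x_0\|\le\dist(C_1,C_2)+R$, so the set of minimizers of $g$ on $C$ is bounded; being also closed, it is compact. Thus Theorem~\ref{minimum_noncompact} applies and, since $\dist(C_1,C_2)^2\neq 0$, it gives the bound of Theorem~\ref{minimum} with $n$ replaced by $2n$, $m$ by $m_1+m_2$, and $d,H$ unchanged: $\dist(C_1,C_2)^2\ge (2^{4-n}\tilde H d^{2n})^{-2n\,2^{2n}d^{2n}}$ with $\tilde H=\max\{H,2(2n)+2(m_1+m_2)\}=\max\{H,4n+2m_1+2m_2\}$. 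Taking square roots yields $\dist(C_1,C_2)\ge (2^{4-n}\tilde H d^{2n})^{-n\,2^{2n}d^{2n}}$, which is exactly the claimed bound.

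I expect the only point requiring genuine care to be the verification that the hypotheses of the non-compact version of the main theorem hold here, i.e.\ that the set of minimizers of the squared-distance function on $C_1\times C_2$ is compact, together with the bookkeeping needed to confirm that $C_1\times C_2$ really is a connected component of $T_1\times T_2$ and that the parameters ($2n$ variables, $m_1+m_2$ polynomials, the same degree and height bounds) are substituted into Theorem~\ref{minimum} correctly. Everything else is formal; in particular, the factor-of-two improvement in the exponent over a naive substitution is produced precisely by the final square root.
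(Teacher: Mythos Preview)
Your argument is correct and is essentially the paper's own proof: reduce to Theorem~\ref{minimum_noncompact} for the squared-distance polynomial on $C_1\times C_2\subset\R^{2n}$, check that the set of minimizers is compact, and take a square root at the end. One minor slip: the coefficients of $\sum_i(X_i-Y_i)^2$ have absolute value up to $2$, not $1$; this is harmless because $\tilde H\ge 4n+2m_1+2m_2\ge 8$, so replacing $H$ by $\max\{H,2\}$ leaves $\tilde H$ and the final bound unchanged.
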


The paper is organized as follows: Section \ref{sec:minimum} is devoted to proving the bounds for the minimum. First, we introduce the deformation techniques we use and prove some geometric properties of this deformation which, in particular, enables us to give a characterization of minimizers as solutions to a polynomial system; then, we prove Theorem \ref{minimum}. In Section \ref{sec:distance}, we prove Theorem \ref{distance} and present an easy example to show that the double exponential nature of our bounds is unavoidable.

\section{The minimum of a polynomial function}
\label{sec:minimum}

Let $f_1, \dots, f_{m}, g \in \Z[x_1, \dots, x_n]$ with $n\ge 2$, $d$ an even positive integer such that $\deg (f_1), \dots, \deg (f_{m}) \le d$, and $d_0=\deg(g)\le d$. Let $H\in \N$ be an upper bound on the absolute values of all the coefficients of $f_1, \dots, f_{m}$ and $H_0\in \N$, $H_0\le H$, an upper bound on the absolute values of the coefficients of $g$.
Let $T = \{x \in \R^n \ | \ f_1(x) = \dots = f_{l}(x) = 0, f_{l + 1}(x) \ge  0, \dots, f_{m}(x) \ge  0\}$ and let $C$ be a compact connected component of $T$.

\subsection{The deformation}

Here we introduce some notation that we will use throughout this section. Let
\begin{itemize}

\item $A \in \Z^{(m+1)\times(n+1)}, A = (a_{ij})_{0 \le i \le m, \ 0 \le j \le n}$ be a matrix such that each of its submatrices has maximal rank and $a_{ij} > 0$ for every $i,j$.

\item For every $1 \le i \le m$, $\tilde f_i(x) = \sum_{j = 1}^na_{ij}x_j^{d} +a_{i0}$, $F_i^+(t,x) = f_i(x) + t\tilde f_i(x)$ and  $F_i^-(t,x) = f_i(x) - t\tilde f_i(x)$.

\item $\tilde g(x) = \sum_{j=1}^n a_{0j} x_j^{d} + a_{00}$ and $G(t,x) = g(x) + t\tilde g(x)$.

\item For every $S \subset \{1, \dots, m\}$ and
$\sigma \in \{+,-\}^{S}$,
$$
\hat W_{S, \sigma} = \{(t,x) \in \A \times \A^n \ | \ F_i^{\sigma_i}(t,x) = 0 \hbox{ for every } i \in S \},
$$
$$
\hat Z_{S, \sigma} = \{(t,x) \in \A \times\A^n \ | \ (t,x) \in \hat W_{S, \sigma} \hbox{ and }
\{ \nabla_{x}F_i^{\sigma_i}(t,x), i \in S \} \hbox{ is linearly dependent}
\},
$$
and
$$
\hat V_{S, \sigma} = \{(t,x,\lambda) \in \A \times\A^n \times \P^{\#S} \ | \ (t,x) \in \hat W_{S, \sigma} \hbox{ and }
$$
$$
\lambda_0\nabla_{x} G(t,x) = \sum_{i \in S} \lambda_i \nabla_{x}F_i^{\sigma_i}(t,x)
\},
$$
where $\A$ and $\P$ denote the affine and projective spaces over the complex numbers respectively.
There are
$\sum_{i=1}^{m}{m \choose i}2^i = 3^m - 1$ different sets $\hat W_{S, \sigma}$.
We consider the decomposition of  $\hat W_{S, \sigma}$ as $\hat W_{S, \sigma} = W_{S, \sigma}^{(0)} \cup W_{S, \sigma}^{(1)} \cup W_{S, \sigma}$, where
\begin{itemize}
\item $W_{S, \sigma}^{(0)}$ is the union of the irreducible components of $\hat W_{S, \sigma}$ included in ${t = 0}$,
\item $W_{S, \sigma}^{(1)}$ is the union of the irreducible components of $\hat W_{S, \sigma}$ included in ${t = t_0}$ for some $t_0 \in \C-\{0\}$,
\item $W_{S, \sigma}$ is the union of the remaining irreducible components of $\hat W_{S, \sigma}$,
\end{itemize}
and the analogous decompositions of
$\hat Z_{S, \sigma}$ and $\hat V_{S, \sigma}$ as $\hat Z_{S, \sigma} = Z_{S, \sigma}^{(0)} \cup Z_{S, \sigma}^{(1)} \cup Z_{S, \sigma}$ and $\hat V_{S, \sigma} = V_{S, \sigma}^{(0)} \cup V_{S, \sigma}^{(1)} \cup V_{S, \sigma}$
respectively.

\item For a group of variables  $y$, $\Pi_y$ will indicate the projection to the coordinates $y$.
\end{itemize}

\bigskip

We start by constructing a matrix $A$ satisfying the conditions required above and bounding their entries.

\begin{lemma}\label{smallmatrix} There exists a matrix
$A \in \Z^{(m+1)\times(n+1)}, A = (a_{ij})_{0 \le i \le m, \ 0 \le j \le n}$, such that each of its submatrices has maximal rank and $0 < a_{ij} \le 2(n  +m)$ for every $i,j$.
\end{lemma}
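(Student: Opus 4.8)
The plan is to build $A$ one row at a time, choosing each new row to avoid a finite collection of "bad" affine conditions coming from the submatrix-rank requirement, and then to bound how large the entries must get. First I would reformulate the condition: a matrix $A\in\Z^{(m+1)\times(n+1)}$ has "every submatrix of maximal rank" exactly when, for each $k\le \min(m+1,n+1)$ and each choice of $k$ rows and $k$ columns, the corresponding $k\times k$ minor is nonzero. A standard way to force all these minors nonzero simultaneously is to take a Vandermonde-type matrix: pick distinct positive integers $\xi_0<\xi_1<\dots<\xi_m$ and set $a_{ij}=\xi_i^{\,j}$ — wait, that only handles square Vandermonde minors, not all submatrices. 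So instead I would use the genericity/induction approach: the set of row vectors $v\in\R^{n+1}$ that, together with the already chosen rows, create some vanishing minor is contained in a finite union of proper linear subspaces (for each already-chosen collection of $\le n$ rows and each column subset, requiring the new minor to vanish is one nontrivial linear equation in $v$). Hence there are only finitely many hyperplanes to dodge.

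Next I would make this quantitative. Having fixed the first $i$ rows $R_0,\dots,R_{i-1}$ with all required minors nonzero, the forbidden set for the next row $R_i=(a_{i0},\dots,a_{in})$ is a union of $N$ affine hyperplanes in $\R^{n+1}$, where $N$ counts the relevant minor conditions; a crude bound is $N\le \binom{m+1}{n+1}\binom{n+1}{n+1}\cdots$, but since we only need to land in the positive orthant and any hyperplane through, say, the points with coordinates in $\{1,\dots,t\}$ can contain at most a bounded fraction of the grid $\{1,\dots,t\}^{n+1}$, a box $\{1,\dots,2(n+m)\}^{n+1}$ is large enough to contain a point outside all forbidden hyperplanes provided the number of hyperplanes is at most, roughly, $2(n+m)$. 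The cleanest route is actually to avoid counting minors directly: choose $a_{ij}$ so that the rows are "super-increasing" enough that no small integer combination vanishes — concretely, one shows by a pigeonhole / counting argument that among the $(2(n+m))^{n+1}$ lattice points of the box, at most $m+1$ hyperplanes (one "bad" hyperplane is killed by each previously placed row in the worst bookkeeping) need to be avoided at each of the $m+1$ stages, and $2(n+m)$ distinct values in each coordinate suffice to beat $m+1$ hyperplanes by a dimension count.

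The main obstacle I expect is the bookkeeping that turns "finitely many hyperplanes to avoid" into the clean bound $2(n+m)$: one has to argue that after placing rows $R_0,\dots,R_{i-1}$, all the new minor conditions involving $R_i$ are linear in $R_i$ and nontrivial (this uses that every proper subcollection already has full rank, so the relevant cofactor is nonzero), and then that a product set $\{1,\dots,2(n+m)\}^{n+1}$ — or perhaps a one-parameter curve $\{(s,s^2,\dots)\}$ inside it — meets the complement of any $2(n+m)-1$ hyperplanes. Using a curve of the form $s\mapsto(s,s^2,\dots,s^{n})$ reduces each hyperplane condition to a nonzero polynomial in $s$ of degree $\le n$, so at most $n$ bad values of $s$ per hyperplane; with at most $m+1$ hyperplanes per stage and $m+1$ stages this is at most $(m+1)^2 n$ bad values overall, and one checks $2(n+m)$ exceeds this in the regime of interest, or more carefully re-runs the count stage by stage so that only the current stage's $\le$ (number of already-placed rows) hyperplanes matter, giving the stated bound. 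I would close by exhibiting the explicit choice and verifying positivity and the size bound $0<a_{ij}\le 2(n+m)$.
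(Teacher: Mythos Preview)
Your row-by-row hyperplane-avoidance strategy is a sound way to prove that \emph{some} integer matrix with all minors nonzero exists, but the quantitative part collapses. When you append the $(i+1)$st row, the number of minor conditions you must avoid is
\[
\sum_{k=1}^{\min(i+1,\,n+1)} \binom{i}{k-1}\binom{n+1}{k},
\]
one for each choice of $k-1$ earlier rows and $k$ columns; for $i=m$ this is already at least $\binom{m}{n}$, not ``at most $m+1$''. Likewise the inequality you invoke, ``$2(n+m)$ exceeds $(m+1)^2 n$'', is simply false for generic $m,n$. And the moment-curve idea $s\mapsto(s,s^2,\dots,s^{n+1})$ makes matters worse: to stay inside the box $\{1,\dots,2(n+m)\}^{n+1}$ you would need $s^{\,n+1}\le 2(n+m)$, leaving essentially no integer values of $s$ to choose from. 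So this route can at best produce entries of size exponential in $\min(m,n)$, not linear in $m+n$.

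The paper obtains the bound $2(n+m)$ by a completely different and much sharper trick: start from the Hilbert matrix $A_1=(\tfrac{1}{i+j+1})$, which is a Cauchy matrix and hence has \emph{every} minor nonzero; clear denominators by multiplying by $(n+m+1)!$ to get an integer matrix $A_2$ whose entries and minors have only prime factors $\le n+m+1$; then pick a prime $p$ with $n+m+2\le p\le 2(n+m)+1$ (Bertrand) and reduce $A_2$ modulo $p$. No entry or minor of $A_2$ is divisible by $p$, so the reduced matrix has all entries in $\{1,\dots,p-1\}\subset\{1,\dots,2(n+m)\}$ and all minors nonzero. The key missing idea in your attempt is precisely this: use a matrix that is \emph{globally} totally nonsingular for structural reasons, then exploit the smallness of the prime factors of its minors to reduce modulo a moderate prime.
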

\begin{proof}{Proof.}
Let $p$ be a prime number such that $n+m+2 \le p \le 2n + 2m + 1$, which  exists by Bertrand's postulate.

Consider the Hilbert matrix $A_1 = (\frac{1}{i+j+1})_{0 \le i \le m, \ 0 \le j \le n}$, which is a particular case of a Cauchy matrix;  therefore, every submatrix of $A_1$ has maximal rank.
Let $A_2 = (n+m+1)!A_1$; then, $A_2 \in \Z^{(m+1)\times (n+1)}$ and
the positive prime factors of
every entry of $A_2$ are prime numbers lower than or equal to $n+m+1$. Looking at the formula for the determinant of Cauchy matrices, one can see that the determinant of every square submatrix of $A_2$ is an integer (different from $0$) such that all its prime factors are lower than or equal to $n+m+1$.

Finally, take $A$ as the matrix obtained by replacing every entry of $A_2$ by its remainder in the division by $p$, which is never equal to $0$. Then it is clear that $A$ has the required properties.
\end{proof}

Before proceeding, we will state two basic facts about the varieties previously defined. We postpone the proof of these results to Section \ref{obtthebound}.

\begin{lemma}\label{intersections} Let $S \subset \{1, \dots, m\}$ and
$\sigma \in \{+,-\}^{S}$. If $\#S > n$, the variety $W_{S,\sigma}$ is empty.
\end{lemma}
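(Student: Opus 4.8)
The plan is to exploit the very special structure of the deformation terms $\tilde f_i(x) = \sum_{j=1}^n a_{ij}x_j^d + a_{i0}$, which are (up to scaling) Fermat-type polynomials sharing the same monomial support $\{x_1^d,\dots,x_n^d,1\}$. Fix $S\subset\{1,\dots,m\}$ with $\#S>n$ and $\sigma\in\{+,-\}^S$. Recall that $W_{S,\sigma}$ consists of those irreducible components of $\hat W_{S,\sigma}$ that dominate the $t$-line (they are neither contained in $\{t=0\}$ nor in any $\{t=t_0\}$, $t_0\ne 0$). The key observation is that on such a component $t$ is not identically zero, so it suffices to show that for all but finitely many values of $t$ the fibre of $\hat W_{S,\sigma}$ over $t$ is empty; equivalently, for generic $t$ the system $\{f_i(x)+\sigma_i t\,\tilde f_i(x)=0 : i\in S\}$ has no solution in $\A^n$. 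Since there are $\#S>n$ equations in $n$ variables, and the equations are ``generic enough'' thanks to the maximal-rank condition on $A$, one expects no common solution.

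The natural way to make ``generic enough'' precise is to pass to the highest-degree part. For each $i\in S$ the leading form (in the variables $x$) of $f_i(x)+\sigma_i t\,\tilde f_i(x)$, viewed over the field $\C(t)$, is $\sigma_i t\sum_{j=1}^n a_{ij}x_j^d$ plus possibly the degree-$d$ part of $f_i$; writing $f_i = f_i^{(d)}+\text{(lower order)}$ with $f_i^{(d)}$ its homogeneous degree-$d$ component, the degree-$d$ homogeneous part of $F_i^{\sigma_i}$ is $f_i^{(d)}(x) + \sigma_i t\sum_{j=1}^n a_{ij}x_j^d$, which is a nonzero form over $\C(t)$ (the coefficient of each $x_j^d$ is $[f_i^{(d)}]_j + \sigma_i t\,a_{ij}$, a nonconstant polynomial in $t$, hence nonzero). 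First I would check that these $\#S$ leading forms have no common projective zero over $\overline{\C(t)}$: in the subspace of $\P^{n-1}$ cut out by $x_j = 0$ issues aside, any common zero $\xi$ would give a nonzero vector $(\xi_1^d,\dots,\xi_n^d)$ killed by the $\#S\times n$ matrix whose $(i,j)$ entry is $[f_i^{(d)}]_j+\sigma_i t\,a_{ij}$; I claim this matrix has rank $n$ over $\C(t)$. Indeed, specializing $t\to 0$ is not available (the $a_{ij}$-part disappears), but specializing to a generic large $t$ makes the matrix a perturbation of $t\cdot(\sigma_i a_{ij})$, and since every $\#S\times n$ submatrix — hence in particular every $n\times n$ submatrix — of $A$ restricted to the rows in $S$ and columns $1,\dots,n$ has maximal rank $n$ by construction (Lemma \ref{smallmatrix}), the matrix $(\sigma_i a_{ij})$ has rank $n$; therefore the perturbed matrix has rank $n$ for generic $t$, hence rank $n$ over $\C(t)$. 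Consequently the leading forms have no common zero in $\P^{n-1}$ over $\overline{\C(t)}$.

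Given that, I would invoke the standard fact that an affine system whose homogeneous leading parts have no common zero at infinity is ``proper'': more concretely, if the degree-$d$ forms $F_i^{\sigma_i,(d)}$ have no common projective zero, then the projective closure of $\hat W_{S,\sigma}\cap\{t=\tau\}$ for generic $\tau$ meets the hyperplane at infinity in the empty set intersected appropriately, forcing the affine variety to be finite — but here we have more equations than variables with no common zero at infinity, and one can argue directly: the ideal generated by the leading forms contains, for generic $t$, a power of each $x_j$ (by the projective Nullstellensatz applied over $\overline{\C(t)}$, since their common zero set in $\P^{n-1}$ is empty), so already the leading-form system is inconsistent in $\P^{n-1}$ and the affine system $\{F_i^{\sigma_i}(\tau,x)=0\}$ can have solutions only if the lower-order terms conspire — which happens on a proper Zariski-closed subset of $\tau$-values. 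Thus the projection $\Pi_t(\hat W_{S,\sigma})$ omitting $W^{(0)}_{S,\sigma}\cup W^{(1)}_{S,\sigma}$ is contained in a finite set, so no component of $\hat W_{S,\sigma}$ dominating the $t$-line survives; that is, $W_{S,\sigma}=\emptyset$.

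The main obstacle I anticipate is the rank argument over $\C(t)$: one must be careful that the degree-$d$ part of $f_i$ could be zero or could itself lie in the span of the $a_{ij}$ rows in a degenerate way, so the cleanest route is the specialization-at-generic-$t$ argument above, which reduces everything to the maximal-rank property of the submatrices of $A$ guaranteed by Lemma \ref{smallmatrix} — this is precisely why that lemma was proved first. A secondary point requiring care is the passage from ``generic fibre empty'' to ``no dominant component,'' but this is immediate once one notes that a component $W$ with $\Pi_t(W)$ dense in the $t$-line would have nonempty fibres over a dense open set of $t$-values, contradicting emptiness of the generic fibre.
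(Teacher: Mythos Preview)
Your overall strategy --- show that the generic fibre over the $t$-line is empty, hence no component can dominate --- is the right idea, but the execution has a real gap and does not go through as written.

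The step that fails is the passage from ``the degree-$d$ leading forms of $F_i^{\sigma_i}$ have no common zero in $\P^{n-1}$'' to ``the affine system $\{F_i^{\sigma_i}(\tau,x)=0\}$ is empty for generic $\tau$.'' Emptiness at infinity only tells you the fibre is \emph{finite}, not empty; your Nullstellensatz sentence and the phrase ``lower-order terms conspire on a proper closed set of $\tau$'' do not supply the missing argument. A clean counterexample to the implicit claim: in $\A^2$ the three linear forms $x,\,y,\,x+y$ have leading parts with no common zero in $\P^1$, yet the affine system has the solution $(0,0)$ for \emph{every} value of any parameter. What rescues the specific system in the paper is precisely the constant term $a_{i0}$ of $\tilde f_i$, which your leading-form analysis in $\P^{n-1}$ never touches. (There is also a smaller issue: your matrix argument for the leading forms treats $f_i^{(d)}$ as if it were supported only on $x_1^d,\dots,x_n^d$; cross terms in $f_i^{(d)}$ prevent a common zero $\xi$ from giving a kernel vector $(\xi_1^d,\dots,\xi_n^d)$. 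This is fixable by a degeneration-to-$t=\infty$ argument, but the main gap above is not.)

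The paper's proof repairs both problems in one move: it bihomogenizes, in $(t_0,t)$ and in $(x_0,x)$, and looks at the fibre of the resulting variety in $\P^1\times\P^n$ over $(t_0,t)=(0,1)$. That fibre is cut out by $\sum_{j=0}^{n} a_{ij}x_j^{d}=0$, $i\in S$, now including the $j=0$ column; since $\#S\ge n+1$ and every submatrix of $A$ has maximal rank, this $\#S\times(n+1)$ system has no solution in $\P^n$. Completeness of $\P^n$ then makes the image in $\P^1$ closed and proper, hence finite, and $W_{S,\sigma}=\emptyset$ follows. The crucial point you are missing is to compactify in the $x$-variables (passing to $\P^n$, not $\P^{n-1}$), which is exactly what brings the constant $a_{i0}$ into the rank condition and turns ``finite fibre'' into ``empty fibre''.
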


\begin{lemma} \label{lemma:li} For every $S \subset \{1, \dots, m\}$ and
$\sigma \in \{+,-\}^{S}$, the variety $Z_{S,\sigma}$ is empty.
\end{lemma}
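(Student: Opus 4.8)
First I would dispose of the case $\#S > n$: then any $\#S$ vectors of $\C^n$ are linearly dependent, so $\hat Z_{S,\sigma} = \hat W_{S,\sigma}$; by Lemma~\ref{intersections} we have $W_{S,\sigma} = \emptyset$, so every irreducible component of $\hat Z_{S,\sigma} = \hat W_{S,\sigma}$ is contained in a fiber $\{t = t_0\}$, whence $Z_{S,\sigma} = \emptyset$. From now on set $k := \#S \le n$ and suppose, towards a contradiction, that $Z_{S,\sigma}$ has an irreducible component $Z$, which by the definition of $Z_{S,\sigma}$ is dominant over the $t$-line.

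My plan is to record a witness of the linear dependence and degenerate at $t = \infty$. Consider $\mathcal I \subseteq \A \times \A^n \times \P^{k-1}$ defined by $F_i^{\sigma_i}(t,x) = 0$ $(i \in S)$ together with $\sum_{i\in S}\lambda_i\,\nabla_x F_i^{\sigma_i}(t,x) = 0$. Over every point of $\hat Z_{S,\sigma}$ the gradients satisfy a nontrivial relation, so $\mathcal I$ projects onto $\hat Z_{S,\sigma}$, and therefore some irreducible component of $\mathcal I$ maps dominantly onto $Z$, hence is dominant over the $t$-line. Homogenizing $t$ as $(T_0 : T_1)$ with $t = T_1/T_0$ and $x$ as $(X_0 : \cdots : X_n)$ with $x_j = X_j/X_0$, let $\overline{\mathcal I} \subseteq \P^1 \times \P^n \times \P^{k-1}$ be the closure of that component. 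Since $\overline{\mathcal I}$ is projective and dominant over $\P^1_t$, it meets the fiber $\{T_0 = 0\}$, i.e.\ $t = \infty$.

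Next I would make the equations of $\overline{\mathcal I}$ at $t = \infty$ explicit. Writing $f_i^{(h)}$ for the degree-$d$ homogenization of $f_i$ and $\tilde f_i^{(h)}(X) = \sum_{j=1}^n a_{ij}X_j^d + a_{i0}X_0^d$, clearing denominators and homogenizing in $t$ turns the equations of $\mathcal I$ into $T_0 f_i^{(h)}(X) + \sigma_i T_1\,\tilde f_i^{(h)}(X) = 0$ and $\sum_{i\in S}\lambda_i\bigl(T_0\,\partial_{X_j}f_i^{(h)}(X) + \sigma_i T_1\,\partial_{X_j}\tilde f_i^{(h)}(X)\bigr) = 0$ for $j = 1,\dots,n$ (using $\partial_{X_j}f_i^{(h)}(1,x) = \partial_{x_j}f_i(x)$); these hold on the chosen component, hence on $\overline{\mathcal I}$. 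Evaluating at a point of $\overline{\mathcal I}$ with $T_0 = 0$, so $T_1 \ne 0$, and using $\partial_{X_j}\tilde f_i^{(h)} = d\,a_{ij}X_j^{d-1}$ for $j \ge 1$, I obtain
\[
\tilde f_i^{(h)}(X) = 0 \quad (i\in S), \qquad \sum_{i\in S}\lambda_i\sigma_i\, a_{ij}\,X_j^{d-1} = 0 \quad (j = 1,\dots,n),
\]
with $X \ne 0$ and $\lambda \ne 0$.

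Finally, the maximal-rank property of $A$ from Lemma~\ref{smallmatrix} yields the contradiction. The second system forces the $k \times n$ matrix $(a_{ij}X_j^{d-1})_{i\in S,\,1\le j\le n}$ to have rank $< k$; since $k \le n$ and every submatrix of $A$ has maximal rank, this can only happen when $J := \{\, j\in\{0,1,\dots,n\} : X_j \ne 0 \,\}$ satisfies $\#(J \cap \{1,\dots,n\}) \le k-1$, so $\#J \le k$ (and $J \ne \emptyset$). But then the first system reads $(a_{ij})_{i\in S,\,j\in J}\cdot (X_j^d)_{j\in J} = 0$, where $(a_{ij})_{i\in S,\,j\in J}$ is a $k \times \#J$ submatrix of $A$ with $\#J \le k$ and hence with linearly independent columns; so $X_j^d = 0$ for all $j \in J$, contradicting $X_j \ne 0$ on $J$. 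Thus $\overline{\mathcal I}$ cannot meet $\{T_0 = 0\}$, contradicting its dominance over $\P^1_t$, and $Z_{S,\sigma} = \emptyset$. The delicate part is the bihomogenization and the limit $t \to \infty$: one must verify that the limiting equations genuinely cut out $\overline{\mathcal I}$ there, which in particular requires controlling the branches along which $x$ escapes to infinity and carrying the coordinate $\lambda$ through the degeneration; the closing rank computation — in which the $0$-th column $(a_{i0})_i$ of $A$ is exactly what makes the limiting system in the unknowns $X_j^d$ overdetermined — is then routine.
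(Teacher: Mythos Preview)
Your proof is correct and follows essentially the same route as the paper: lift the dependency condition to an incidence variety with a $\lambda$-factor, compactify to $\P^1\times\P^n\times\P^{\#S-1}$, show the fiber at $t=\infty$ is empty using the maximal-rank hypothesis on $A$, and conclude by properness. The paper does this directly (empty fiber at $(t_0:t)=(0:1)$ $\Rightarrow$ image in $\P^1$ is finite $\Rightarrow$ $Z_{S,\sigma}=\emptyset$), whereas you phrase it as a contradiction starting from a hypothetical dominant component and also split off the case $\#S>n$ beforehand; neither change is necessary, since the fiber-at-infinity computation already covers all $S$. Your closing caveat about the ``delicate part'' is overly cautious: the bihomogenized equations vanish on all of $\mathcal I$, hence on any closure inside $\P^1\times\P^n\times\P^{k-1}$, so no extra control on how $x$ or $\lambda$ degenerate is needed --- you only use that the equations hold at the limit point, not that they cut out $\overline{\mathcal I}$ scheme-theoretically.
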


\subsection{Geometric properties}\label{geomprop}

For every $t \ge 0$, let
\begin{displaymath}
  \begin{aligned}
    T_t = \{x \in \R^n \ |  &\ F^+_1(t,x) \ge 0,  \dots,  F^+_{l}(t,x) \ge 0, & F^+_{l + 1}(t,x) \ge  0, \dots, F^+_{m}(t,x) \ge  0, &\\
    & F^-_1(t,x) \le 0 , \dots,  F^-_{l}(t,x) \le 0  \}.
  \end{aligned}
\end{displaymath}

As $\tilde f_i (x) > 0 $ for every $1\le i\le m$ and $x\in \R^n$, it is clear that:
\begin{itemize}
\item If $0 \le t_1 \le t_2$, then $T_{t_1} \subset T_{t_2}$,
\item $T_0 = T$.
\end{itemize}

Since $T$ is a closed set, its connected components are closed. Then,
since $C$ is a compact connected component of $T$, there exists $\mu > 0$ such that $\dist(C,C') \ge 2\mu$ for every connected component $C'$ of $T$, $C'\ne C$.
Let us denote
$$
C_\mu = \{x \in \R^n \ | \ \dist(x, C) < \mu \}.
$$

\begin{lemma}\label{auxlemmacomp}
There exists $t_0 > 0$ such that for every $0 \le t \le t_0$, the connected component of $T_t$ containing $C$ is included in $C_\mu$.
\end{lemma}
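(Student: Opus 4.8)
The plan is to argue by contradiction, exploiting the compactness of $C$ together with the monotonicity $T_{t_1}\subset T_{t_2}$ and the fact that $T_0 = T$. Suppose no such $t_0$ exists. Then we can pick a decreasing sequence $t_k\searrow 0$ and, for each $k$, a point $x_k$ lying in the connected component $D_k$ of $T_{t_k}$ that contains $C$, but with $x_k\notin C_\mu$, i.e. $\dist(x_k,C)\ge\mu$. Since $D_k$ is connected and meets both $C$ (inside the open set $C_\mu$) and the complement of $C_\mu$, it must contain a point $y_k$ on the boundary sphere $\{\dist(\,\cdot\,,C)=\mu\}$; moreover the whole ``path'' from $C$ to $y_k$ stays in $D_k\subset T_{t_k}\subset T_{t_1}$. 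The key point is that $T_{t_1}$ is bounded: indeed each $F_i^+(t_1,x)=f_i(x)+t_1\tilde f_i(x)\ge 0$ forces, for $x$ large, the dominant term $t_1 a_{ij}x_j^d$ (with $d$ even and $a_{ij}>0$) to control $f_i$, so the defining inequalities of $T_{t_1}$ confine $x$ to a compact region. Hence the $y_k$ lie in a fixed compact set and, after passing to a subsequence, $y_k\to y_\infty$ with $\dist(y_\infty,C)=\mu$, so in particular $y_\infty\notin C$.

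Next I would show $y_\infty\in T$. Each defining (in)equality of $T$ is obtained from the corresponding one for $T_{t_k}$ by letting $t\to 0$: from $F_i^+(t_k,y_k)\ge 0$ and $F_i^-(t_k,y_k)\le 0$ for $1\le i\le l$ we get, in the limit, $f_i(y_\infty)\ge 0$ and $f_i(y_\infty)\le 0$, hence $f_i(y_\infty)=0$; from $F_i^+(t_k,y_k)\ge 0$ for $l+1\le i\le m$ we get $f_i(y_\infty)\ge 0$. Thus $y_\infty\in T$ and $y_\infty\notin C$, so $y_\infty$ lies in some other connected component $C'\ne C$ of $T$, whence $\dist(y_\infty,C)\ge 2\mu$, contradicting $\dist(y_\infty,C)=\mu$.

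It remains to rule out the alternative that the component $D_k$ of $T_{t_k}$ through $C$ is \emph{entirely} contained in $C_\mu$ for infinitely many $k$ — but that is exactly the conclusion we want, with $t_0=t_k$ for any such $k$ (using monotonicity of $T_t$ in $t$ to handle all smaller $t$). Actually some care is needed here: $D_{t}$ for $0\le t\le t_k$ need not be nested, so rather than invoking monotonicity of components directly I would run the contradiction argument uniformly, i.e. assume that for \emph{every} $t_0>0$ there is some $t\in(0,t_0]$ with the component of $T_t$ through $C$ not inside $C_\mu$, and extract the sequence $t_k$ from this. The main obstacle, and the step deserving the most care, is establishing that $\bigcup_{0\le t\le t_1}T_t$ (equivalently $T_{t_1}$) is bounded so that the limit point $y_\infty$ exists; this uses crucially that $d$ is even and that all $a_{ij}$ are strictly positive, which is why the deformation was set up with the auxiliary polynomials $\tilde f_i$. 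Everything else is a routine compactness-and-continuity argument.
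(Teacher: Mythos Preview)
Your approach is essentially the same as the paper's: argue by contradiction, pick $t_k\searrow 0$ with the component $D_k$ of $T_{t_k}$ through $C$ not contained in $C_\mu$, use connectedness of $D_k$ to find $y_k\in D_k$ with $\dist(y_k,C)=\mu$, extract a convergent subsequence, and show the limit lies in $T$ at distance exactly $\mu$ from $C$, contradicting the choice of $\mu$.

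There is, however, one false step. You claim that $T_{t_1}$ is bounded because the inequalities $F_i^+(t_1,x)\ge 0$ ``confine $x$ to a compact region''. The logic is backwards: for large $\|x\|$ the term $t_1\tilde f_i(x)$ is large and positive and dominates $f_i(x)$, so $F_i^+(t_1,x)>0$ and $F_i^-(t_1,x)<0$. Thus \emph{all} the defining inequalities of $T_{t_1}$ are satisfied far from the origin, and $T_{t_1}$ is in general unbounded (indeed $T_{t_1}\supset T$, and $T$ may well be unbounded --- only the component $C$ is assumed compact). Fortunately this step is unnecessary: since $C$ is compact, the set $\{x:\dist(x,C)=\mu\}$ is already compact, and your points $y_k$ lie in it by construction. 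This is exactly how the paper obtains the convergent subsequence, in one line. With that correction your argument goes through; the final paragraph worrying about monotonicity of the components is also superfluous, since the negation of the statement directly yields the sequence $t_k\searrow 0$.
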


\begin{proof}{Proof.}
Assume the statement does not hold. Let $(t_k)_{k \in \N}$ be a decreasing sequence of positive numbers converging to $0$ such that, if  $C'_k$ is the connected component of $T_{t_k}$ containing $C$, then $C'_k \not \subset C_\mu$.

Since $C'_k$ is connected, contains $C$ and intersects the set
$\{x \in \R^n \ | \ \dist(x, C) \ge \mu\}$, there is a point $r_k \in C'_k$ with $\dist(r_k, C) = \mu$. Since $(r_k)_{k\in \N}$ is a sequence contained in the compact set $\{x \in \R^n \ | \ \dist(x, C) = \mu\}$, it has a subsequence which converges to a point $r$ such that $\dist(r, C) = \mu$. Without loss of generality, we may assume this subsequence to be the original one.

On the other hand, since $r_{k} \in C'_{k} \subset T_{t_{k}}$,  we have that, for every $1 \le i \le m$,
$$F_i^+(t_{k}, r_{k}) \ge 0, \  \hbox{ \ and so, }  F_i^+(0, r) = \lim_{k \to \infty} F_i^+(t_{k}, r_{k}) \ge 0,$$
and, for every $1 \le i \le l$,
$$F_i^-(t_{k}, r_{k}) \le 0, \  \hbox{  \ and so, }  F_i^-(0, r) = \lim_{k \to \infty} F_i^-(t_{k}, r_{k}) \le 0.$$
This implies that $r \in T$, leading to a contradiction, since there is no point in $T$ whose distance to $C$ equals $\mu$.
\end{proof}

The following proposition shows that in order to obtain minimizers for the polynomial function $g$ on the compact connected component $C$ it is enough to consider polynomial systems with at most as many equations as variables.

\begin{proposition}\label{prop_prin} There exist $z \in C$, $S \subset \{1, \dots, m\}$ with $0 \le \# S \le n$, and $\sigma \in \{+, -\}^S$ with $\sigma_i = +$ for $l + 1 \le i \le m$,
such that $(0,z) \in \Pi_{(t,x)}(V_{S,\sigma})$ and $g(z) = \min\{g(x) \ | \ x \in C\}$.
\end{proposition}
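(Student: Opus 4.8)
The plan is to obtain the point $z$ as a limit of minimizers of $g$ on the perturbed sets $T_t$, and to extract the set $S$ of active constraints and the Lagrange multipliers by a limiting argument on the Karush-Kuhn-Tucker conditions. First I would fix the value $t_0>0$ given by Lemma \ref{auxlemmacomp} and, for each $t\in(0,t_0]$, let $C_t$ be the connected component of $T_t$ containing $C$; by that lemma $C_t\subset C_\mu$, so $\overline{C_t}$ is compact (it is closed and bounded), and $g$ attains a minimum on $\overline{C_t}$ at some point $x_t$. Shrinking $t_0$ if necessary I would also arrange, using the non-vanishing of the $\tilde f_i$ on $\R^n$ and a compactness argument, that the closure $\overline{C_t}$ is exactly the corresponding connected component of the closed set $T_t$ and that $x_t$ can be taken in its interior relative to... more precisely, I would work directly with the basic closed description of $T_t$ near $x_t$: let $S_t\subset\{1,\dots,m\}$ record which of the constraints $F_i^+(t,x)\ge 0$ (for $l+1\le i\le m$), $F_i^+(t,x)\ge 0,\ F_i^-(t,x)\le 0$ (for $1\le i\le l$) are active at $x_t$. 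Encoding the equalities $F_i^\pm=0$ ($1\le i\le l$) as the two active inequalities, this produces a pair $(S_t,\sigma_t)$ with $\sigma_i=+$ forced for $i>l$, and since $x_t$ minimizes $g$ on $T_t$ subject to these constraints, the Fritz John conditions give a nonzero vector $(\lambda_0,(\lambda_i)_{i\in S_t})\in\P^{\#S_t}$ with $\lambda_0\nabla_x G(t,x_t)=\sum_{i\in S_t}\lambda_i\nabla_x F_i^{\sigma_i}(t,x_t)$; hence $(t,x_t,\lambda)\in\hat V_{S_t,\sigma_t}$.

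Next I would apply Lemma \ref{lemma:li}: the active gradients at an interior-of-the-deformation point cannot be linearly dependent once $t\ne0$ is generic, or rather, more safely, I would argue that we may choose $x_t$ so that $(t,x_t)\notin\hat Z_{S_t,\sigma_t}$; since $Z_{S_t,\sigma_t}$ is empty, the only components of $\hat Z_{S_t,\sigma_t}$ live over finitely many values of $t$, so for all but finitely many $t\in(0,t_0]$ the active gradients $\{\nabla_x F_i^{\sigma_i}(t,x_t)\}_{i\in S_t}$ are linearly independent, which in particular forces $\#S_t\le n$ and makes the Fritz John multipliers satisfy $\lambda_0\ne0$ (genuine KKT). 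Consequently the point $(t,x_t)$ lies not in $W^{(0)}_{S_t,\sigma_t}\cup W^{(1)}_{S_t,\sigma_t}$ but — after discarding the finitely many bad $t$ — in the "good" part, so $(t,x_t)\in\Pi_{(t,x)}(V_{S_t,\sigma_t})$ with $t\ne 0$.

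Now I would pass to the limit $t\to 0^+$. Since there are only finitely many pairs $(S,\sigma)$, some fixed pair $(S,\sigma)$ occurs for a sequence $t_k\downarrow 0$; along a subsequence $x_{t_k}\to z$ for some $z\in C_\mu\cap\overline{C_\mu}$, and passing to the limit in the constraints (as in the proof of Lemma \ref{auxlemmacomp}) gives $z\in T$; since $z\in\overline{C_\mu}$ and $C$ is the unique component of $T$ meeting $\overline{C_\mu}$, we get $z\in C$. The point $(t_k,x_{t_k})$ lies in the constructible set $\Pi_{(t,x)}(V_{S,\sigma})$, whose Zariski closure is contained in $\Pi_{(t,x)}(\overline{V_{S,\sigma}})$; because $\{t=0\}$ is not contained in $W_{S,\sigma}$ (by definition of the decomposition $W_{S,\sigma}$ excludes components over a single $t$-value), the slice over $t=0$ of this closure is still a proper subvariety of dimension tracking that of $V_{S,\sigma}$, and standard semicontinuity/closedness of $\Pi_{(t,x)}$ on the relevant component yields $(0,z)\in\Pi_{(t,x)}(V_{S,\sigma})$. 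Finally, for the optimality: $g(z)=\lim g(x_{t_k})=\lim\min_{T_{t_k}}g\le \min_{C}g$ because $C\subset T_{t_k}$, while $g(z)\ge\min_C g$ since $z\in C$; hence $g(z)=\min\{g(x)\mid x\in C\}$, and $\#S\le n$ as noted.

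The main obstacle I expect is the last step — showing $(0,z)\in\Pi_{(t,x)}(V_{S,\sigma})$ rather than merely $(0,z)$ lying in the closure of the image; one must verify that limits of the Lagrange multipliers $\lambda$ stay in $\P^{\#S}$ (which is automatic by compactness of projective space) and, crucially, that the limiting point does not slip into $W^{(0)}_{S,\sigma}$ or otherwise off $V_{S,\sigma}$. Controlling this requires the dimension bookkeeping built into the $W^{(\cdot)}$/$V^{(\cdot)}$ decomposition together with Lemmas \ref{intersections} and \ref{lemma:li}, and possibly a further shrinking of $t_0$ and a careful choice of which irreducible component of $V_{S,\sigma}$ the points $(t_k,x_{t_k})$ accumulate on. The other steps (Fritz John, compactness of $\overline{C_t}$, the limit argument for membership in $T$) are routine given Lemma \ref{auxlemmacomp}.
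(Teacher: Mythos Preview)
Your outline follows the same strategy as the paper's proof, but there are two genuine gaps.

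\textbf{First, you minimize the wrong function.} You let $x_t$ minimize $g$ on the perturbed component, and then write the Fritz John condition as $\lambda_0\nabla_x G(t,x_t)=\sum_{i\in S_t}\lambda_i\nabla_x F_i^{\sigma_i}(t,x_t)$. These do not match: minimizing $g$ yields $\nabla g(x_t)$, not $\nabla_x G(t,x_t)=\nabla g(x_t)+t\nabla\tilde g(x_t)$, in the multiplier identity. Consequently the point $(t_k,x_{t_k},\lambda)$ does \emph{not} lie in $\hat V_{S,\sigma}$ as defined, and the whole passage to $V_{S,\sigma}$ collapses. The paper minimizes $G(t_k,\cdot)$ on $C'_k$ instead; this is exactly what puts the minimizer into $\hat V_{S,\sigma}$, and the optimality argument survives because $G(t_k,x)\to g(x)$ uniformly on the compact set $\overline{C_\mu}$.

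\textbf{Second, the last step is much simpler than you fear.} You worry about closures of projections, semicontinuity, and the limit ``slipping into $W^{(0)}_{S,\sigma}$''. None of this is needed. Once you have shown that $(t_k,z_k,\lambda_k)\in V_{S,\sigma}$ (this uses that $t_k\ne 0$ and that $t_0$ was chosen below all $|t|$ with $t\in\Pi_t(V^{(1)}_{S,\sigma})$, which you should include explicitly in your choice of $t_0$), observe that $V_{S,\sigma}$ is a union of irreducible components of a closed subvariety of $\A\times\A^n\times\P^{\#S}$, hence closed in the Euclidean topology. Since $(t_k,z_k)\to(0,z)$ and $\P^{\#S}$ is compact, a subsequence of $\lambda_k$ converges to some $\lambda_\infty\in\P^{\#S}$, and closedness gives $(0,z,\lambda_\infty)\in V_{S,\sigma}$ directly. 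There is no closure-of-projection issue and no ``slipping'': you stay inside $V_{S,\sigma}$ the whole time.

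A minor point: $C'_k$ is already closed (it is a connected component of the closed set $T_{t_k}$) and bounded (it sits in $C_\mu$), so no separate closure is needed.
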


\begin{proof}{Proof.}
Let $t_0 > 0$ be such that:
\begin{itemize}
\item for every $0 \le t \le t_0$, the connected component of $T_t$ containing $C$ is included in $C_\mu$,
\item for every $S \subset \{1, \dots, m\}, \sigma \in \{+,-\}^S$ and $t \in \Pi_t(W^{(1)}_{S,\sigma}) \cup \Pi_t(Z^{(1)}_{S,\sigma}) \cup \Pi_t(V^{(1)}_{S,\sigma})$, $t_0 < |t|$.
\end{itemize}

Let $(t_k)_{k \in \N}$ be a decreasing sequence of positive numbers converging to  $0$ with $t_1 \le t_0$.
Consider the connected component $C'_k$ of $T_{t_k}$ which contains $C$ (note that $C'_k$ is a compact set) and let $z_k$ be a point in $C'_k$ at which the function $G(t_k, \cdot )$ attains its minimum value over $C'_k$. Since the sequence $(z_k)_{k \in \N}$ is bounded, it has a convergent subsequence; without loss of generality, we may assume this subsequence to be the original one. Let $z= \lim_{k \to \infty} z_k$. Proceeding as in the proof of Lemma \ref{auxlemmacomp}, we have that $z \in C$.

In order to see that $g(z) = \min \{g(x) \ | \ x \in C\}$, note that
for every $x \in C \subset C'_{k}$, we have that $G(t_{k}, z_{k}) \le G(t_k, x)$ for every $k$; therefore,
$$
g(z) = G(0,z) =
\lim_{k \to \infty}G(t_k, z_k) \le
\lim_{k \to \infty}G(t_k, x) = G(0,x) = g(x).
$$

Now, for every $k$ and every $x \in \R^n$, at most one of the polynomials $F^+_i(t_k, x)$ and $F^-_i(t_k, x)$ may vanish, since $\tilde f_i(x) >0$. For every $k \in \N$, let $$S_k = \{i \in \{1, \dots, l\} \ | \ F^+_i(t_k, z_k) = 0 \hbox { or } F^-_i(t_k, z_k) = 0 \}\cup \{i \in \{l+1, \dots, m\} \ | \ F^+_i(t_k, z_k) = 0 \}.$$
Without loss of generality, we may assume that $S_k$ is the same set $S$ for every $k \in \N$; moreover, we may assume that, for each $i \in S$, it is always the same polynomial $F^+_i(t_k, z_k)$ or $F^-_i(t_k, z_k)$ the one which vanishes, thus defining a function $\sigma \in \{+,-\}^S$.

Since $(t_k, z_k) \in \hat W_{S,\sigma}$,  $t_k \not \in \Pi_t(W^{(0)}_{S,\sigma} \cup W^{(1)}_{S,\sigma})$ and $W_{S,\sigma} = \emptyset$ if $\#S > n$ (Lemma \ref{intersections}), we have that $\# S \le n$.
In addition, since $t_k \not \in \Pi_t(Z^{(0)}_{S,\sigma} \cup Z^{(1)}_{S,\sigma})$ and $Z_{S,\sigma} = \emptyset$ (Lemma \ref{lemma:li}), it follows that  $(t_k, z_k) \not \in \hat Z_{S,\sigma}$; therefore,
$\{\nabla_{x}F_i^{\sigma_i}(t_k,z_k), i \in S \}$ is a linearly independent set for every $k \in \N$. Finally, since the function $G(t_k, \cdot)$ attains a local minimum at the point $z_k$ when restricted to the set
$\{x \in \R^n \ | \ F_i^{\sigma_i}(t_k, x) = 0 \hbox { for every } i \in S\}$, by the Lagrange Multiplier Theorem, there exists $(\lambda_{i,k})_{i \in S}$ such that
$$\nabla_{x} G(t_k,z_k) = \sum_{i \in S} \lambda_{i,k} \nabla_{x}F_i^{\sigma_i}(t_k,z_k).$$
Therefore, $(t_k, z_k, (1,(\lambda_{i,k})_{i \in S})) \in \hat V_{S,\sigma}$;
but since $t_k \not \in \Pi_t(V^{(0)}_{S,\sigma} \cup V^{(1)}_{S,\sigma})$, we conclude that $(t_k, z_k, (1,(\lambda_{i,k})_{i \in S})) \in V_{S,\sigma}$.
Without loss of generality, we may assume that $(1,(\lambda_{i,k})_{i \in S})_{k \in \N}$ converges to a point $(\lambda_{0},(\lambda_{i,0})_{i \in S})) \in \P^{\# S}$; then $(0,z,(\lambda_{0},(\lambda_{i,0})_{i \in S})) \in V_{S,\sigma}$ and, therefore, $(0,z) \in \Pi_{(t,x)}(V_{S,\sigma})$ as we wanted to prove.
\end{proof}

\subsection{Obtaining the bounds}\label{obtthebound}

In this section we prove Lemmas \ref{intersections} and \ref{lemma:li} and we do the estimates to obtain the bounds we are looking for.

\begin{notation}
For  $p\in \Q[x_1,\dots, x_n]$ and $e \in \N$, $e \ge \deg p$,  $ \Hom (p)_e$  will denote the polynomial $ x_0^{e} p(x_1/x_0,\dots, x_n/x_0)\in \Q[x_0,\dots, x_n]$ which is obtained by homogenizing $p$ up to degree $e$.
\begin{itemize}
\item For every $1\le i \le m$,
$$\overline {F_i^{+}}(t_0,t,x_0,x)= t_0 \, \Hom(f_i)_{d}(x_0,x)+  t \, \Hom(\tilde f_i)_{d}(x_0,x) = t_0  \, \Hom(f_i)_{d}(x_0,x) +  t \left(\sum_{j=0}^n a_{ij} x_j^{d}\right),$$
$$\overline {F_i^{-}}(t_0,t,x_0,x)= t_0 \, \Hom(f_i)_{d}(x_0,x)-  t \, \Hom(\tilde f_i)_{d}(x_0,x) = t_0  \, \Hom(f_i)_{d}(x_0,x) -  t \left(\sum_{j=0}^n a_{ij} x_j^{d}\right).$$
\item For $S\subset \{1,\dots, m\}$ and $\sigma \in \{ +, -\}^S$, for every $1\le j \le n$, $$\overline G_{S,\sigma,j}(t_0,t,x_0,x, \lambda_0,\lambda)=
t_0 \ \Hom\left( \lambda_0 \frac{\partial g}{\partial x_j} - \sum_{i\in S} \lambda_i \frac{\partial f_i}{\partial x_j} \right)_{d-1} + t \ \Hom\left( \lambda_0 \frac{\partial \tilde g}{\partial x_j} - \sum_{i\in S} \lambda_i \sigma_i \frac{\partial \tilde f_i}{\partial x_j} \right)_{d-1}={}$$ $${}=
t_0 \left( \lambda_0 \Hom\Big(\frac{\partial g}{\partial x_j}\Big)_{d-1} - \sum_{i\in S} \lambda_i \Hom\Big(\frac{\partial f_i}{\partial x_j}\Big)_{d-1} \right) + t \ d x_j^{d -1} \left(\lambda_0 a_{0j} - \sum_{i\in S} \lambda_i \sigma_i a_{ij}  \right). $$
\end{itemize}
\end{notation}

\begin{proof}{Proof of Lemma \ref{intersections}.} Consider the polynomials $\overline{F_i^{\sigma_i}}$ for every $i\in S$. These polynomials are bi-homogeneous in the sets of variables $(t_0, t)$, $(x_0,x)$; therefore, they define a variety $\overline {\hat W}_{S,\sigma}$ in $\P^1 \times \P^n$ (which contains $\hat W_{S, \sigma}$ when embedded in $\P^n$). Now, the fiber $\Pi_{(t_0,t)}^{-1}(0,1)$ with respect to the projection $\Pi_{(t_0,t)}: \overline {\hat W}_{S,\sigma} \to \P^1$ is given by the set of common zeroes of the polynomials $\sum_{j=0}^n a_{ij} x_j^{d}$ for $i\in S$. But this system has no solution in $\P^n$, since, by the assumption on $A$ and the fact that $\# S >n$, the matrix $(a_{ij})_{i\in S, 0\le j \le n}$ has maximal rank $n+1$. We conclude that $\Pi_{(t_0,t)}(\overline{\hat W}_{S,\sigma})$ is not equal to $\P^1$. Since $\P^n$ is a complete variety, $\Pi_{(t_0,t)}(\overline{\hat W}_{S,\sigma})$ is closed and hence, it is a finite set. Therefore, $W_{S,\sigma} = \emptyset$.
\end{proof}

\begin{proof}{Proof of Lemma \ref{lemma:li}.} Consider the variety $\mathcal{Z}_{S,\sigma}$ defined in $\P^1\times \P^n \times \P^{\# S -1}$ by the polynomials $\overline{F_i^{\sigma_i}}$, $i\in S$, and each of the $n$ components of the vector $\sum_{i \in S} \lambda_i \nabla_{x} \overline{F_i^{\sigma_i}} $. Note that the projection to $\P^1\times \P^n$ of $\mathcal{Z}_{S,\sigma}$ contains $\hat Z_{S,\sigma}$ (when embedded in $\P^1\times \P^n$).
Consider the projection $\Pi_{(t_0,t)} : \mathcal{Z}_{S,\sigma}\to \P^1$. We will show that the fiber $\Pi_{(t_0,t)}^{-1}(0,1)$ is empty or, equivalently, that the system
$$\left\{
\begin{array}{rcl}
\displaystyle{\sum_{j=0}^n} a_{ij}x_j^{d} &=& 0\qquad i\in S \\
\displaystyle{ d x_j^{d-1} \sum_{i\in S}}  \sigma_i a_{ij} \lambda_i &= & 0\qquad j=1,\dots, n.\\
\end{array}
\right.$$
has no solution in $\P^n \times \P^{\# S -1}$. Assume, on the contrary, that $(x_0,x, \lambda)$ is a solution and let $k= \#\{ j\in \{1,\dots, n\} \mid x_j =0\}$. When specializing $x$ in the second set of equations, we get a linear equation system for $\lambda$ consisting of $n-k$ linearly independent equations in $\# S$ unknowns which has a non-trivial solution; hence $\# S\ge n+1-k$. This implies that the first $\# S$ equations do not have a common solution in $\P^n$ with $k$ vanishing coordinates.

We conclude that $\Pi_{(t_0,t)}(\mathcal{Z}_{S,\sigma})$ is not equal to $\P^1$. Since $\P^n \times \P^{\# S -1}$ is complete, as in the proof of the previous lemma, it follows that $Z_{S,\sigma} = \emptyset$.
\end{proof}

Now we use the previous constructions to derive our bounds. We will define univariate polynomials $Q_{S, \sigma}(U)$  having the minimum that $g$ takes over the compact connected components of $T$ as roots and we will obtain our bounds by means of these polynomials. Let
$$P(U,x_0,x) = Ux_0^{d_0} - \Hom(g)_{d_0}(x_0,x).$$
For $S\subset\{1,\dots, m\}$ with $\#S\le n$ and $\sigma \in \{ +,-\}^{S}$, let
$$R_{S, \sigma}(t_0,t,U) = \Res_{(x,x_0),(\lambda,\lambda_0)}(P;\overline{F_i^{\sigma_i}}, i\in S;  \overline{G}_{S,\sigma, j}, 1\le j \le n) \in \Z[t_0,t,U],$$ where $\Res_{(x,x_0),(\lambda,\lambda_0)}$ denotes the bihomogeneous resultant associated to the bi-degrees of the polynomials involved: $(d_0,0), (d, 0)$ repeated $s$ times, and $(d- 1, 1)$ repeated $n$ times.

\begin{lemma} The polynomial $R_{S,\sigma}(t_0,t,U)$ is not identically zero.
\end{lemma}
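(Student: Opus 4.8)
The plan is to show that the bihomogeneous resultant $R_{S,\sigma}(t_0,t,U)$ does not vanish identically by exhibiting a single point $(t_0,t,U) \in \C^3$ at which it is nonzero; by the standard characterization of the resultant (it vanishes at a parameter value exactly when the specialized system of $\#S+n+1$ bihomogeneous polynomials in $\P^n \times \P^{\#S}$ has a common projective solution, together with the fact that the resultant is a nonzero polynomial in the generic coefficients of polynomials of the prescribed bidegrees), it suffices to find one specialization for which the associated polynomial system has \emph{no} common zero in $\P^n \times \P^{\#S}$. The natural choice is $(t_0,t,U) = (0,1,U)$ for a suitable $U$, since at $t_0 = 0$ the polynomials $\overline{F_i^{\sigma_i}}$ and $\overline{G}_{S,\sigma,j}$ degenerate to the purely monomial expressions $\sum_{j=0}^n a_{ij}x_j^d$ and $d\,x_j^{d-1}\big(\lambda_0 a_{0j} - \sum_{i\in S}\lambda_i\sigma_i a_{ij}\big)$, whose zero locus we have already analyzed.

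First I would specialize $t_0 = 0$, $t = 1$ and examine the system. The equations coming from $\overline{F_i^{\sigma_i}}$ and from $\overline{G}_{S,\sigma,j}$ reduce exactly to the system appearing in the proof of Lemma~\ref{lemma:li}, which we proved has no solution in $\P^n \times \P^{\#S-1}$. Here the relevant projective factor is $\P^{\#S}$ rather than $\P^{\#S-1}$, so I must also account for the extra coordinate $\lambda_0$ and for the remaining equation $P(U,x_0,x) = U x_0^{d_0} - \Hom(g)_{d_0}(x_0,x)$. The argument of Lemma~\ref{lemma:li} shows: if $(x_0,x,\lambda_0,\lambda)$ is a common zero at $t_0=0$, then from the $\overline{F_i^{\sigma_i}}$-equations, $(x_0,x)$ is a common projective zero of the forms $\sum_{j=0}^n a_{ij}x_j^d$, $i \in S$; letting $k$ be the number of vanishing coordinates among $x_1,\dots,x_n$, the $\overline{G}_{S,\sigma,j}$-equations force $\#S \ge n+1-k$ provided $(\lambda_0,\lambda) \neq 0$, and then — exactly as in that proof, using that every submatrix of $A$ has maximal rank — the $\#S$ forms $\sum_j a_{ij}x_j^d$ cannot simultaneously vanish at a point of $\P^n$ with $k$ zero coordinates. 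This rules out all solutions with $(x_0,x)$ arbitrary, regardless of $U$; hence for \emph{any} value of $U$ the specialized system at $(0,1,U)$ has empty zero set in $\P^n \times \P^{\#S}$, so $R_{S,\sigma}(0,1,U) \neq 0$ and $R_{S,\sigma}$ is not identically zero.

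The one point that needs care — and which I expect to be the main obstacle — is the bookkeeping with the extra projective coordinate $\lambda_0$. In Lemma~\ref{lemma:li} the multipliers $\lambda_i$ range over $\P^{\#S-1}$, so a nontrivial solution of the linear system in the $\lambda_i$ is automatic; here, with $\lambda = (\lambda_0,\lambda_i)_{i\in S} \in \P^{\#S}$, one could \emph{a priori} have $\lambda_i = 0$ for all $i \in S$ and $\lambda_0 \neq 0$, which would make the $\overline{G}_{S,\sigma,j}$-equations vacuous. But in that case the equations $\overline{F_i^{\sigma_i}} = 0$ at $t_0=0$ still force $\sum_j a_{ij}x_j^d = 0$ for all $i\in S$, and I must check these have no solution in $\P^n$; this is immediate when $\#S \le n$ only after invoking the maximal-rank hypothesis on $A$ once more, observing that if the $x_j$ do not all vanish then the corresponding Vandermonde-type submatrix (in the $x_j^d$) being of full row rank forces more than $n$ of the $x_j^d$ to be involved, a contradiction — so one does need to treat the case of vanishing coordinates by the same counting argument. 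Once this degenerate subcase is disposed of, the remaining argument is a verbatim repetition of the reasoning in the proofs of Lemmas~\ref{intersections} and \ref{lemma:li}, and the conclusion $R_{S,\sigma} \not\equiv 0$ follows.
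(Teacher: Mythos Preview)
Your overall strategy---specialize at $(t_0,t)=(0,1)$ and show the bihomogeneous system has no common zero in $\P^n\times\P^{\#S}$ for some (or every) $U$---is the same as the paper's, but your execution contains a genuine error: the specialized system \emph{does} have solutions, so the claim that the resultant is nonzero for every $U$ is false.

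The mistake is in the counting step. You write that the $\overline{G}_{S,\sigma,j}$-equations at $(0,1)$ force $\#S \ge n+1-k$, quoting the bound from Lemma~\ref{lemma:li}. But that lemma works in $\P^{\#S-1}$, with only the multipliers $(\lambda_i)_{i\in S}$; here the multipliers are $(\lambda_0,(\lambda_i)_{i\in S})\in\P^{\#S}$, one more coordinate. For each $j$ with $x_j\ne 0$ you get the linear equation $a_{0j}\lambda_0-\sum_{i\in S}\sigma_i a_{ij}\lambda_i=0$, so you have $n-k$ independent equations in $\#S+1$ unknowns, and a nontrivial solution exists as soon as $\#S\ge n-k$, not $n+1-k$. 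In the borderline case $k=n-s$ (with $s=\#S$) the linear system has a unique solution in $\P^s$, and the $s$ equations $\sum_{j\notin J} a_{ij}x_j^{d}+a_{i0}x_0^{d}=0$ have $d^{s}$ solutions with $x_0=1$ in the remaining $s$ variables. Each such solution $(1,x,\lambda_0,\lambda)$ solves the whole system $\mathcal S_\infty$, and choosing $U=g(x)$ makes $P$ vanish as well, so $R_{S,\sigma}(0,1,g(x))=0$. Thus you cannot conclude $R_{S,\sigma}(0,1,U)\ne 0$ for all $U$.

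The paper's fix is exactly to accept that $\mathcal S_\infty$ has solutions and to show instead that it has only \emph{finitely many}, all lying in $\{x_0\ne 0\}$. Then $R_{S,\sigma}(0,1,U)$ can vanish only at the finitely many values $U=g(x)$ arising from those solutions, hence is a nonzero polynomial in $U$. Your argument needs this finiteness-and-$x_0\ne 0$ analysis (the trichotomy on $\#J$ versus $n-s$) rather than the stronger, and false, emptiness claim.
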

\begin{proof}{Proof.}
Let $\mathcal{S}$ be the polynomial system
$$\left\{ \begin{array}{rcl} \overline{F_i^{\sigma_i}}(t_0,t,x_0,x)=0 &  & i\in S,\\  \overline{G}_{S,\sigma, j}(t_0,t,x_0,x,\lambda_0,\lambda)=0 & & 1\le j \le n.\end{array} \right.$$

By specializing $(t_0,t)=(0,1)$ in the polynomials of the system $\mathcal{S}$, we get the following polynomial system of equations:
$$\mathcal{S}_\infty= \left\{ \begin{array}{rcl} \displaystyle\sum_{j=0}^n a_{ij} x_j^{d} = 0 & &  i \in S,\\
d x_j^{d-1} \left( a_{0j}\lambda_0-\displaystyle\sum_{i\in S}  \sigma_i a_{ij}\lambda_i \right)=0 & &  1\le j \le n. \end{array} \right. $$
We will show that $\mathcal{S}_\infty$ has finitely many solutions in $\P^{n}\times\P^{s}$, none of them lying in the hyperplane $\{x_0 =0\}$.
As a consequence of this fact, it follows that the only roots of $R_{S,\sigma}(0,1,U)$ are the finitely many values $g(x)$ where $(1,x,\lambda_0,\lambda)$ is a solution to $\mathcal{S}_\infty$; therefore $R_{S,\sigma}(t_0,t,U)$ is not identically zero.

First, note that if $(x_0,x, \lambda_0,\lambda)$ is a solution to $\mathcal{S}_\infty$, the last $n$ equations of this system imply that, for every $1\le j \le n$, either
$x_j = 0$ or $a_{0j}\lambda_0- \sum_{i\in S} \sigma_i a_{ij}\lambda_i      =0 .$
Let us show that, for every $J\subset \{1,\dots, n\}$, the system $\mathcal{S}_\infty$ has only finitely many solutions such that $x_j= 0$ if and only if $j\in J$. For a fixed $J$, these solutions are the solutions to
$$\mathcal{S}_\infty^{(1,J)}=\left\{ \sum_{j\notin J} a_{ij} x_j^{d} =0 \right. \quad  i\in S  \quad \hbox{ and } \quad \mathcal{S}_\infty^{(2,J)} = \left\{ a_{0j}\lambda_0- \sum_{i\in S} \sigma_i a_{ij} \lambda_i  = 0 \right.\quad  j\notin J.$$
Taking into account that any submatrix of $(a_{ij})$ has maximal rank, we have that:
\begin{itemize}
\item If $\#J > n-s$, the system $\mathcal{S}_\infty^{(1, J)}$
implies that $x_j = 0$ for every $j\notin J$, contradicting the definition of $J$.
\item If $\#J<n-s$, then $\mathcal{S}_\infty^{(2,J)}$ has a unique solutions $(\lambda_0,\lambda) =0$, since it consists of at least as many equations as unknowns; then, $\mathcal{S}_\infty$ has no solutions in $\P^n\times \P^s$ corresponding to $J$.
\item If $\#J = n-s$, $\mathcal{S}_\infty^{(2,J)}$ has a unique solution in $\P^s$. On the other hand, $\mathcal{S}_\infty^{(1,J)}$
has no solutions with $x_0 =0$ and exactly $d^{s}$ solutions with $x_0 =1$.
\end{itemize}
\end{proof}

Write $R_{S,\sigma}(t_0,t,U) = t^{e_{S,\sigma}}\tilde R_{S,\sigma}(t_0,t,U)$ with $e_{S,\sigma} \in \N_0$ and $\tilde R_{S,\sigma}(t_0,t,U)$ not a multiple of $t$. Note that $R_{S,\sigma}(1,t, g(x))$ vanishes on $\Pi_{(t,x)}(\hat V_{S,\sigma})$ and so, $\tilde R_{S,\sigma}(1,t,g(x))$ vanishes on  $\Pi_{(t,x)}(V_{S,\sigma})$.
Let
$$ Q_{S,\sigma}(U)= \tilde R_{S,\sigma}(1,0,U).$$

\begin{proposition}\label{boundcoeff} The polynomial $Q_{S,\sigma}(U)\in \Z[U]$ is not identically zero. The degree of $Q_{S,\sigma}(U)$ is at most $$\binom{n}{s} d^s (d -1)^{n-s},$$
where $s= \# S$, and its coefficients
 have an absolute value lower than  $$ M_{S,\sigma}= (2H_0)^{M_1} (2\widetilde H)^{sM_2 + nM_3} d^{nM_3} N_1^{M_1} \, N_2^{sM_2} \,  N_3^{n M_3} \binom{M_1 + N_1 -1}{N_1-1} \binom{M_2 + N_2 -1}{N_2-1}^s\binom{M_3 + N_3 -1}{N_3-1}^n,$$
where
\begin{itemize}
\item $\widetilde H=\max\{ H, 2n+2m\}$,
\item $M_1 = \binom{n}{s} d^s (d -1)^{n-s}$, $M_2=\binom{n}{s} d_0 d^{s-1} (d-1)^{n-s} $,
 $M_3= \binom{n-1}{s} d_0 d^{s} (d-1)^{n-s-1} $,
\item $N_1=\binom{d_0+n}{n}$,  $N_2=\binom{d +n}{n}$,   $N_3=\binom{d-1+n}{n}(s+1)$.
\end{itemize}
\end{proposition}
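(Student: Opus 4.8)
{Proof.}
The plan is to realise $R_{S,\sigma}(t_0,t,U)$ as a specialisation of the universal bihomogeneous resultant and to bound its coefficients using the classical formula for the multidegree of such a resultant together with the height bound of \cite{Sombra04}. Let $\mathrm{Res}$ denote the bihomogeneous resultant, viewed as a polynomial with integer coefficients in the coefficients of $n+s+1$ generic polynomials in $\P^n\times\P^s$ of bidegrees $(d_0,0)$, then $(d,0)$ repeated $s$ times, then $(d-1,1)$ repeated $n$ times; note that $N_1=\binom{d_0+n}{n}$, $N_2=\binom{d+n}{n}$ and $N_3=\binom{d-1+n}{n}(s+1)$ are precisely the numbers of coefficients of the generic polynomials of each of these three bidegrees. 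By the formula for the multidegree of a multihomogeneous resultant, $\mathrm{Res}$ is homogeneous of degree $\delta$ in the coefficients of a fixed one of the inputs, where $\delta$ is the coefficient of $\zeta^{n}\eta^{s}$ in the product $\prod(d_i\zeta+e_i\eta)$ taken over the bidegrees $(d_i,e_i)$ of the remaining $n+s$ inputs. A direct computation gives $\delta=M_1$ for the input of bidegree $(d_0,0)$, $\delta=M_2$ for each input of bidegree $(d,0)$, and $\delta=M_3$ for each input of bidegree $(d-1,1)$. Since $U$ occurs in $R_{S,\sigma}$ only through the coefficients of $P$, and linearly in a single one of them, this already yields $\deg_U R_{S,\sigma}\le M_1$, hence $\deg Q_{S,\sigma}\le M_1$.

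Next I would substitute into $\mathrm{Res}$ the actual coefficients of $P$, of $\overline{F_i^{\sigma_i}}$ for $i\in S$, and of $\overline G_{S,\sigma,j}$ for $1\le j\le n$; each of these is a polynomial in $t_0,t,U$ with at most two monomials. Its $\ell_1$-norm is at most $1+H_0\le 2H_0$ for $P$ (only the coefficient of $x_0^{d_0}$ involves $U$, and it equals $U$ minus the constant term of $g$); at most $2\widetilde H$ for $\overline{F_i^{\sigma_i}}$, since its nonzero coefficients are among the coefficients of $f_i$ and the entries of $A$, all bounded by $\widetilde H$ by Lemma~\ref{smallmatrix}; and at most $2d\widetilde H$ for $\overline G_{S,\sigma,j}$, whose coefficients are linear in $(t_0,t)$ with integer coefficients bounded by $\max\{d_0H_0,dH,2d(n+m)\}\le d\widetilde H$. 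Using that the $\ell_1$-norm is submultiplicative under multiplication of polynomials, together with the fact that every monomial of $\mathrm{Res}$ has degree $M_1$ in the coefficients of the $(d_0,0)$-input, degree $M_2$ in the coefficients of each $(d,0)$-input and degree $M_3$ in the coefficients of each $(d-1,1)$-input, every coefficient of $R_{S,\sigma}(t_0,t,U)$ has absolute value at most
$$\|\mathrm{Res}\|_1\,(2H_0)^{M_1}(2\widetilde H)^{sM_2}(2d\widetilde H)^{nM_3},$$
where $\|\mathrm{Res}\|_1$ is the sum of the absolute values of the integer coefficients of $\mathrm{Res}$. Finally I would bound $\|\mathrm{Res}\|_1$ by the number of its monomials times their maximal absolute value: the number of monomials is at most $\binom{M_1+N_1-1}{N_1-1}\binom{M_2+N_2-1}{N_2-1}^{s}\binom{M_3+N_3-1}{N_3-1}^{n}$ (counting monomials of each multidegree), and the maximal absolute value of a coefficient is at most $N_1^{M_1}N_2^{sM_2}N_3^{nM_3}$ by the resultant height bound of \cite{Sombra04}; multiplying and using $(2d\widetilde H)^{nM_3}=(2\widetilde H)^{nM_3}d^{nM_3}$ gives exactly $\|R_{S,\sigma}\|_\infty\le M_{S,\sigma}$.

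It remains to deduce the statement for $Q_{S,\sigma}$. Since $P$ is independent of $t_0,t$ while $\overline{F_i^{\sigma_i}}$ and $\overline G_{S,\sigma,j}$ are homogeneous of degree $1$ in $(t_0,t)$, the multihomogeneity of $\mathrm{Res}$ shows that $R_{S,\sigma}(t_0,t,U)$ is homogeneous of degree $sM_2+nM_3$ in $(t_0,t)$; hence, in the decomposition $R_{S,\sigma}=t^{e_{S,\sigma}}\tilde R_{S,\sigma}$, the polynomial $Q_{S,\sigma}(U)=\tilde R_{S,\sigma}(1,0,U)$ coincides with the coefficient of $t_0^{sM_2+nM_3-e_{S,\sigma}}t^{e_{S,\sigma}}$ in $R_{S,\sigma}$. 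Therefore $Q_{S,\sigma}$ is not identically zero --- it is nonzero by the previous lemma together with the definition of $e_{S,\sigma}$ --- its degree is at most $\deg_U R_{S,\sigma}\le M_1$, and each of its coefficients, being a coefficient of $R_{S,\sigma}$, has absolute value at most $M_{S,\sigma}$. I expect the main technical difficulty to lie in the careful bidegree bookkeeping producing the exponents $M_1,M_2,M_3$ and the multidegrees $N_1,N_2,N_3$, and in invoking the bound of \cite{Sombra04} in exactly the form that yields the factor $N_1^{M_1}N_2^{sM_2}N_3^{nM_3}$.
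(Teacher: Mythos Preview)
Your proof is correct and follows essentially the same approach as the paper: both compute the multidegrees $M_1,M_2,M_3$ of the bihomogeneous resultant, invoke Sombra's height bound for the factor $N_1^{M_1}N_2^{sM_2}N_3^{nM_3}$, count monomials via multihomogeneity for the binomial factors, and identify $Q_{S,\sigma}$ as a particular $(t_0,t)$-coefficient of $R_{S,\sigma}$ via homogeneity. The only cosmetic difference is that you package the product-of-coefficients estimate using the submultiplicativity of the $\ell_1$-norm, whereas the paper spells this out term by term after setting $t_0=1$; the resulting bounds are identical.
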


\begin{proof}{Proof.}
Since $\tilde R_{S,\sigma}(t_0,t,U)$ is homogeneous in the variables $t_0, t$ and it is not a multiple of $t$, it follows that $Q_{S, \sigma}(U)$ is not identically zero.

  The degree of the polynomials $f_i$ is bounded by  $d$ and their coefficients
  are of absolute value at most $H$. The corresponding quantities for $g$
  are $d_0 \leq d$ and $H_0$.
      By abuse of notation, let $A$ be an upper
  bound for the absolute values of the elements of the matrix $A$.
  From Lemma~\ref{smallmatrix}, we may assume
  $A \leq 2(n+m)$.

We deduce that  $P \in (\Z [U])[x_0, x]$ is a polynomial of degree $d_0$ and
 its coefficients are linear polynomials in $U$ with coefficients of magnitude  at most $H_0$.
Also, $\overline {F_i^{\pm}}(t_0,t,x_0,x) \in (\Z[t_0, t])[x_0, x]$  are polynomials of degree $d$
  and their coefficients are linear forms in $(t_0, t)$  with coefficients of magnitude at most
  $\widetilde H$.
  Finally, $\overline G_{S,\sigma,j}(t_0,t,x_0,x, \lambda_0,\lambda)
  \in (\Z[t_0, t])[x_0,x, \lambda_0,\lambda] $
  are bihomogeneous polynomials in $((x_0,x), (\lambda_0,\lambda))$ of degree $d-1$ in the variables $(x_0,x)$ and linear in the variables $(\lambda_0, \lambda)$, and their coefficients are
  linear forms in $(t_0, t)$ with coefficients of magnitude at most $d \widetilde H$.

  We compute the resultant $R_{S, \sigma}$ that eliminates $(x_0, x)(\lambda_0,
  \lambda)$, which is a polynomial  in $(\Z[U])[t_0, t]$. Recall that
  the bihomogeneous resultant $\text{Res}_{(x_0,x), (\lambda, \lambda_0)}$ of a bihomogeneous system of $n+s+1$ polynomials
  consisting of a polynomial of bidegree $(d_0,0)$, $s$ polynomials of bidegree $(d, 0)$ and $n$ polynomials of bidegree
  $(d - 1, 1)$ is a multihomogeneous polynomial of degree
   $$M_1= \text{Bez}( (d, 0), s; (d - 1, 1), n)=\binom{n}{s} d^s (d -1)^{n-s} $$
    in the coefficients
  of the polynomial of bidegree $(d_0,0)$, of degree
  $$M_2= \text{Bez}( (d_0,0), 1; (d, 0), s-1; (d - 1, 1), n) = \binom{n}{s} d_0 d^{s-1} (d-1)^{n-s} $$
   in the coefficients of each of the $s$ polynomials
  of bidegree $(d, 0)$, and of degree
  $$M_3=\text{Bez}( (d_0,0),1;(d, 0), s; (d - 1, 1), n-1) = \binom{n-1}{s} d_0 d^{s} (d-1)^{n-s-1} $$ in the coefficients of each of  the $n$ polynomials of
  bidegree $(d-1,1)$. Here $\text{Bez}(\bm{d}_1,s_1; \dots; \bm{d}_r, s_r)$ denotes the B\'ezout number of a bihomogeneous system formed by $s_i$ polynomials of bi-degree $\bm{d}_i=(d_{i,1}, d_{i,2})$ for $1\le i \le r$ (see \cite[Chapter IV, Sec. 2]{Shaf}).

It follows that $R_{S, \sigma}$ is a sum of terms of the form
  \begin{equation}\label{eq:resterms}
    \rho \, \alpha \, \prod_{i\in S} \beta_i \, \prod_{1\le j \le n} \gamma_j,
  \end{equation}
  where $\rho \in \Z$  is a coefficient of the bihomogeneous resultant $\text{Res}_{(x_0,x), (\lambda, \lambda_0)}$,
$\alpha$ denotes a monomial in the
  coefficients of $P$ of total degree $M_1$,  $\beta_i$
  denotes a monomial in the coefficients of $\overline {F_i^{\pm}}$ of
  total degree $M_2$ for every $i\in S$, and $\gamma_j$
  denotes a monomial in the coefficients of $\overline G_{S,\sigma,j}$
  of total degree $M_3$ for every  $1\le j \le n$.
In particular, the degree of $R_{S, \sigma}$ in the variable $U$ is at most $M_1$. 

Note that the polynomial $Q_{S, \sigma}(U)$ is the coefficient of $R_{S, \sigma}\in (\Z[U])[t_0, t]$ corresponding to the smallest power of $t$. Therefore,   $$\deg Q_{S,\sigma}(U)\le \deg_U R_{S, \sigma}(U,t_0,t) \le M_1 = \binom{n}{s} d^s (d -1)^{n-s}.$$

In order to estimate the magnitude of its coefficients, we may set $t_0=1$ in  $R_{S, \sigma}$  and, by abuse of notation, write $R_{S, \sigma}$ for the specialized polynomial.
For every $k$, we will compute an upper bound for the magnitude of the  coefficients of the polynomial in $\Z[U]$ that appears as coefficient of
$t^k$ in $R_{S,\sigma}$.

First, we apply \cite[Theorem 1.1]{Sombra04} to bound the coefficients $\rho \in \Z$ of the resultant $\text{Res}_{(x_0,x),(\lambda_0,\lambda)}$.  We obtain:
  \begin{equation}\label{eq:boundcoeff}
    \abs{\rho} \leq N_1^{M_1} \, (N_2^{M_2})^{s} \, ( N_3^{M_3})^{n},
  \end{equation}
  where $N_1=\binom{d_0+n}{n}$ and $N_2=\binom{d +n}{n}$ are the cardinalities of the
   supports of generic homogeneous polynomials of degrees $d_0$ and $d$ respectively, and $N_3=\binom{d-1+n}{n}(s+1)$ is the cardinality of
   the support of a generic bihomogeneous polynomial of bidegree $(d - 1, 1)$ in $(x_0,x), (\lambda_0, \lambda)$.

Note that $\alpha\in \Z [U] $ is a polynomial in $U$  with integer coefficients and degree bounded by $M_1$, and the absolute value of the coefficient of the power $U^j$ in $\alpha^{M_1}$ is at most
\begin{equation}\label{eq:boundU}
\binom{M_1}{j} H_0^{M_1 - j} <  (2H_0)^{M_1}.
\end{equation}

On the other hand, the product $\prod_{i\in S} \beta_i \, \prod_{1\le j \le n} \gamma_j\in \Z[t]$ is a polynomial in the variable $t$ with integer coefficients, which is a product of $sM_2$ linear factors that are coefficients of the $\overline{F_i^{\pm}}$ and $nM_3$ linear factors that are coefficients of the $\overline G_{S,\sigma,j}$. Thus, for a fixed $k$, using the upper bounds on the coefficients of the polynomials $\overline{F_i^{\pm}}$ and $\overline G_{S,\sigma,j}$, it follows that the coefficient of $t^k$ in this product is at most
\begin{equation}\label{eq:boundt}
\binom{sM_2 + nM_3}{k} \widetilde H^{sM_2} (d \widetilde H)^{nM_3}< (2\widetilde H)^{sM_2 + nM_3} d^{nM_3}.
\end{equation}

Finally, taking into account the multihomogeneous structure of the resultant, it follows that $R_{S,\sigma}$ is a sum of at most
\begin{equation}\label{eq:boundterms}
 \binom{M_1 + N_1 -1}{N_1 - 1} \binom{M_2 + N_2 -1}{N_2 - 1}^s\binom{M_3 + N_3 -1}{N_3-1}^n
\end{equation}
terms of the form (\ref{eq:resterms}).

Combining the upper bounds (\ref{eq:boundcoeff}), (\ref{eq:boundU}), (\ref{eq:boundt}) and (\ref{eq:boundterms}) we obtain the stated upper bound for the absolute value of the coefficients of $Q_{S, \sigma}(U)$.
\end{proof}

We can prove now the main result of the paper:

\begin{proof}{Proof of Theorem \ref{minimum}.}
By Proposition \ref{prop_prin}, the polynomial $g$ attains its minimum value over $C$ at a point $z_0\in C$ such that $(0,z_0) \in \Pi_{(t,x)}(V_{S,\sigma})$ for certain
$S \subset \{1, \dots, m\}$ with $0 \le \# S \le n$, and $\sigma \in \{+, -\}^S$ with $\sigma_i = +$ for $l + 1 \le i \le m$. Now, for every $(0,z) \in \Pi_{(t,x)}(V_{S,\sigma})$, we have that $Q_{S,\sigma}(g(z))= 0$.

Then, if $s=\# S$, Proposition \ref{boundcoeff} implies that $g(z_0)$ is an algebraic number of degree at most $ \binom{n}{s} d^s (d -1)^{n-s}\le 2^{n-1} d^n$. Furthermore, if $g(z_0) \ne 0$,  its absolute value is greater than or equal to $M_{S, \sigma}^{-1}$ (see \cite[Proposition 2.5.9]{MignotteStefanescu}).

We keep the notation in Proposition \ref{boundcoeff}. In order to get the stated bound for the minimum, we use the following facts:

\begin{itemize}

\item $N_1, N_2 \le \frac32 d^n$,

\item $N_3 \le \frac94 d^n$:
for $n = 2$ and $n=3$ the bound holds easily, for $n \ge 4$,
$$
N_3 \le (n+1)\prod_{i=1}^n \frac{ d-1+i}{i}
\le (n+1) d \Big(\frac{\sum_{i=2}^n \frac{ d-1+i}{i}}{n-1}\Big)^{n-1}
\le (n+1) d \Big(( d-1)\frac{\log(n)}{n-1} + 1\Big)^{n-1} \le
$$
$$
\le (n+1) d \Big(( d-1)0.47 + 1\Big)^{n-1}
\le (n+1)0.74^{n-1} d^{n} \le \frac94  d^{n}.
$$

\item $\binom{M_i + N_i -1}{N_i - 1} \le 2^{M_i + N_i}$ for $1\le i \le 3$.

\end{itemize}

Then we have
$$
M_{S, \sigma} \le
2^{ 2(M_1 + sM_2 + nM_3) + (\log_2(3) - 1)(M_1 + sM_2) + 2(\log_2(3) - 1) n M_3 + N_1 + sN_2 + nN_3 } \ \cdot $$ $$\cdot \
\tilde H^{ M_1 + sM_2 + nM_3}
 d^{n(M_3 +  M_1 + sM_2 + nM_3  )  }.
$$

Since
$M_1 + sM_2 + nM_3 \le (n+1)\binom{n}{s}d^{n} \le (n+1)2^{n-1}d^{n}$ and $M_3 \le 2^{n-2}d^n$, we have
$$
M_{S, \sigma} \le
2^{ ( ( (3\log_2(3) + 1)n + 2\log_2(3) + 2)2^{n-2} + \frac32(n+1) + \frac94n )d^n}
\tilde H^{ (n+1)2^{n-1}d^n}
d^{ (2n^2 +  3n) 2^{n-2}d^n},
$$
and taking into account that $\tilde H \ge 6$ and $d \ge 2$, we obtain
$$
M_{S, \sigma} \le
2^{ (
( -2n^2  + (\log_2(3) + 2)n + 4\log_2(3) + 4)2^{n-2}
 + \frac32(n+1) + \frac94n )d^n}
\tilde H^{ n2^{n}d^n}
d^{ n^22^{n}d^n}.
$$

Finally, the result holds since for $n \ge 2$,
$$ ( -2n^2 +  (\log_2(3) + 2)n +   4\log_2(3) + 4 )2^{n-2} + \frac32(n+1) + \frac94n  \le \Big(4-\frac{n}{2}\Big)n2^n.$$
\end{proof}

\begin{remark}
The algebraic degrees of the coordinates of a minimizer are also bounded by  $2^{n-1} d^n $. This can be seen simply by replacing the polynomial $g$ by a coordinate $x_i$ in the previous construction, namely, taking $P(U,x_0,x) = Ux_0 - x_i$.
\end{remark}

In applications (see Section \ref{sec:distance}), sometimes the minimization of a polynomial $g$ needs to be done over a component not necessarily compact, but with a compact set of minimizers. The result in Theorem \ref{minimum} can be extended to this situation:

\begin{theorem}\label{minimum_noncompact}
Let $T = \{x \in \R^n \ | \ f_1(x) = \dots = f_{l}(x) = 0, f_{l + 1}(x) \ge  0, \dots, f_{m}(x) \ge  0\}$ be defined by polynomials $f_1,\dots, f_m \in \Z[x_1,\dots, x_n]$ with degrees bounded by $d$ and coefficients of absolute value at most $H$, and let $C$ be a connected component of $T$. Let $g\in \Z[x_1,\dots, x_n]$ be a polynomial of degree $d_0\le d$ and coefficients of absolute value bounded by $H_0\le H$, and let $g_{\min, C}$ the minimum value that $g$ takes over $C$. Assume that the set
$$C_{\min}= \{ z \in C \mid g(z) = g_{\min, C}\}$$
is compact.
Then, $g_{\min, C}$ is an algebraic number of degree at most
$2^{n-1} d^n $
and,
if it is not zero, its absolute value is greater or equal to $(2^{4-\frac{n}2}\tilde H d^n)^{-n2^nd^n}$.
\end{theorem}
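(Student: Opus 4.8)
The plan is to reduce Theorem~\ref{minimum_noncompact} to the compact case, Theorem~\ref{minimum}, by localizing the problem around the compact set $C_{\min}$ and, crucially, doing so without enlarging the family of deformed polynomials. Since $C_{\min}$ is compact, fix $R>0$ with $C_{\min}\subset B_R:=\{x\in\R^n\mid\|x\|<R\}$. Put $\widehat T=T\cap\overline{B_R}$, a compact basic closed semialgebraic set, and let $\widehat C$ be the connected component of $\widehat T$ containing a fixed point $z_0\in C_{\min}$. By connectedness $\widehat C\subseteq C$, so $\widehat C$ is compact and $g(z_0)=g_{\min,C}=\min\{g(x)\mid x\in\widehat C\}$, because $g\ge g_{\min,C}$ on $C\supseteq\widehat C$. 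Moreover, since $C_{\min}$ lies in the open ball $B_R$, the compact set $C\cap\partial B_R$ contains no minimizer of $g$ on $C$, so $\eta:=\min\{g(x)-g_{\min,C}\mid x\in C\cap\partial B_R\}>0$ (if $C\cap\partial B_R=\emptyset$, i.e.\ $C$ is already bounded, there is nothing new to prove).

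Next I would rerun Section~\ref{sec:minimum} for $\widehat T$ and $\widehat C$, deforming only $f_1,\dots,f_m$ and leaving the ball constraint undeformed, i.e.\ working with $\widehat T_t:=T_t\cap\overline{B_R}$; note that the matrix $A$, and hence $\tilde H=\max\{H,2n+2m\}$, are unchanged. The abstract properties used in Lemma~\ref{auxlemmacomp} still hold ($\widehat T_{t_1}\subseteq\widehat T_{t_2}$ for $0\le t_1\le t_2$, $\widehat T_0=\widehat T$, $\widehat C$ a compact connected component with $\dist(\widehat C,\widehat T\setminus\widehat C)=:2\mu'>0$, and limits of points of $\widehat T_{t_k}$ lie in $\widehat T$ since $\overline{B_R}$ is closed), so that proof applies verbatim and yields $t_0'>0$ such that for $0\le t\le t_0'$ the component of $\widehat T_t$ containing $\widehat C$ lies in $\{x\mid\dist(x,\widehat C)<\mu'\}$; shrink $t_0'$ further to avoid the finitely many values in $\Pi_t(W^{(1)}_{S,\sigma})\cup\Pi_t(Z^{(1)}_{S,\sigma})\cup\Pi_t(V^{(1)}_{S,\sigma})$, as in the proof of Proposition~\ref{prop_prin}. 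Taking $t_k\downarrow0$ with $t_1\le t_0'$ and letting $z_k$ minimize $G(t_k,\cdot)$ over the (compact) component of $\widehat T_{t_k}$ containing $z_0$, the heart of the argument is to show that $z_k\in B_R$ for $k$ large; granting this, $z_k$ is a local minimizer of $G(t_k,\cdot)$ on $T_{t_k}$, its active set $S$ is contained in $\{1,\dots,m\}$, and the proof of Proposition~\ref{prop_prin} continues unchanged (Lemmas~\ref{intersections} and~\ref{lemma:li}, the Lagrange Multiplier Theorem, and passage to a limit $z^*$ which, by the confinement and the separation $\mu'<\dist(\widehat C,\widehat T\setminus\widehat C)$, lies in $\widehat C\subseteq C$ with $g(z^*)=g_{\min,C}$ and $(0,z^*)\in\Pi_{(t,x)}(V_{S,\sigma})$). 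From here the polynomial $Q_{S,\sigma}$ of Proposition~\ref{boundcoeff}—which never involves the ball polynomial—satisfies $Q_{S,\sigma}(g_{\min,C})=0$, and the estimates at the end of the proof of Theorem~\ref{minimum} give degree at most $2^{n-1}d^n$ and, if $g_{\min,C}\ne0$, $|g_{\min,C}|\ge(2^{4-\frac n2}\tilde H d^n)^{-n2^nd^n}$, with the same $\tilde H$ and no dependence on $R$.

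I expect the main obstacle to be precisely the claim that the perturbed minimizers $z_k$ stay in the open ball $B_R$. I would argue by contradiction: if $z_k\in\partial B_R$ along a subsequence, then $z_k$ lies in the component of $\widehat T_{t_k}$ through $z_0$, hence within $\mu'$ of $\widehat C$; passing to a convergent subsubsequence $z_k\to z^*$ gives $z^*\in\partial B_R$ with $\dist(z^*,\widehat C)\le\mu'$ and $z^*\in T\cap\overline{B_R}=\widehat T$, so $z^*\in\widehat C\subseteq C$ (as $\mu'<2\mu'\le\dist(\widehat C,\widehat T\setminus\widehat C)$) and therefore $g(z^*)\ge g_{\min,C}+\eta$; but $z_0$ lying in the same component gives $G(t_k,z_k)\le G(t_k,z_0)=g_{\min,C}+t_k\tilde g(z_0)$, whence $g(z^*)=\lim_k G(t_k,z_k)\le g_{\min,C}$, a contradiction. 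The remaining work is routine bookkeeping: checking that every use of compactness of $C$ in Section~\ref{sec:minimum} is now supplied by $\widehat C$ together with the fixed ball $\overline{B_R}$, and that the number of polynomials entering the resultant is still $m$.
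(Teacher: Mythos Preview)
Your argument is correct. Both you and the paper reduce to a compact situation by intersecting with a large closed ball containing $C_{\min}$ and then argue that the extra ball constraint never enters the active set $S$, so that Proposition~\ref{boundcoeff} applies with the original $m$ polynomials and the original $\tilde H$. The difference is in how this last point is secured.

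The paper's proof is much shorter: it adds the ball $f_{m+1}(x)=(M+1)^2-\sum x_i^2\ge 0$ as an ordinary $(m{+}1)$-st constraint, deforms it along with the others, and applies Proposition~\ref{prop_prin} as a black box to the compact component $C'$ of $T'=T\cap\{f_{m+1}\ge 0\}$ containing a point of $C_{\min}$. The limit point $z_0$ produced there satisfies $g(z_0)=\min_{C'}g=g_{\min,C}$, hence $z_0\in C_{\min}$ and $f_{m+1}(z_0)\ne 0$; since in the proof of Proposition~\ref{prop_prin} the set $S$ is defined by the constraints vanishing along the whole sequence $(t_k,z_k)$, the limit relation $F_{m+1}^+(t_k,z_k)\to f_{m+1}(z_0)\ne 0$ forces $m+1\notin S$. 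No separate ``$\eta$-gap'' argument is needed.

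Your route---leaving the ball undeformed and proving directly that the perturbed minimizers $z_k$ stay in the open ball via the gap $\eta=\min_{C\cap\partial B_R}(g-g_{\min,C})>0$---is more hands-on and reopens Section~\ref{sec:minimum}, but it has the mild bookkeeping advantage that the matrix $A$ of Lemma~\ref{smallmatrix} is never enlarged to $(m{+}2)\times(n{+}1)$, so there is no question at all about whether $\tilde H$ should involve $2n+2m$ or $2n+2(m{+}1)$; the paper's version implicitly relies on the fact that, once $S\subset\{1,\dots,m\}$, the resultant $R_{S,\sigma}$ only sees rows $0$ and $i\in S$ of $A$.
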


\begin{proof}{Proof.}
Take $M\in \R$  so that $C_{\min}\subset B(0,M)$, and let $C'$ be the connected component of the set
$$
T' = \{
x \in \R^{n} \mid
f_1(x) = \dots = f_{l}(x) = 0, f_{l + 1}(x) \ge  0, \dots, f_{m}(x) \ge  0,
(M + 1)^2 - \sum_{i=1}^nx_i^2\ge 0
\}
$$
which contains $C_{\min}$. Note that $C'$ is compact, since $T'$ is bounded.

By Proposition \ref{prop_prin}, there exist $z_0 \in C'$, $S \subset \{1,\dots, m + 1\}$ with $\#S \le n$ and $\sigma \in \{+,-\}^S$ such that $(0,z_0)
\in \Pi_{(t,x)}(V_{S,\sigma})$ (for the corresponding variety $V_{S,\sigma}$ associated to the equations of $T'$) such that $g(z_0)= g_{\min,C}$.

Since $(M + 1)^2 - \sum_{i=1}^n (z_{0,i})^2 \ne 0$, we have that
$S \subset \{1,\dots, m\}$ (see the proof of Proposition \ref{prop_prin}). Now the result follows by Proposition \ref{boundcoeff}, proceeding as in the proof of Theorem \ref{minimum}.
\end{proof}

\section{Bounds for the separation between disjoint connected components of basic closed semialgebraic sets}\label{sec:distance}

In this section we will apply our previous results to the case when $g$ is the square of the Euclidean distance in order to
obtain bounds for the separation between two disjoint (and at least one compact) connected components of semialgebraic sets defined by non-strict inequalities. In particular, this gives a separation bound for two connected components of a closed semialgebraic set provided that one of them is compact.

\begin{proof}{Proof of Theorem \ref{distance}.}
 We have that  $C_1 \times C_2$ is a connected component of the set $T_1 \times T_2 = \{ (x,y) \in \R^{2n} \ | \
f_1(x) = \dots = f_{l_1}(x) = 0, f_{l_1 + 1}(x) \ge  0, \dots, f_{m_1}(x) \ge  0,
g_1(y) = \dots = g_{l_2}(y) = 0, g_{l_2 + 1}(y) \ge  0, \dots, g_{m_2}(y) \ge  0
\}$, and if $D(x,y) = \sum_{i = 1}^n(x_i - y_i)^2$, then the minimum value that $D$ takes over $C_1\times C_2$ equals $\dist^2(C_1, C_2)>0$.
In addition, the set$ \{(x,y) \in C_1 \times C_2 \ | \ \dist(x,y) = \dist(C_1, C_2)\}$ is bounded and, therefore, compact.
Then, the result follows from Theorem \ref{minimum_noncompact}.
\end{proof}

\begin{example}
Consider $d, H, n \in \N$ with even $d$ and $f_1, \dots, f_n \in \Z[x_1, \dots, x_n]$ defined by
$$
f_1(x) = Hx_1 - 1,  \quad \quad  f_i(x) = x_{i} - x_{i-1}^d \ \hbox{ for } 2\le i \le n-1, \quad \quad f_n(x) = x_n^2 - x_{n-1}^{d}.
$$
The set $\{x \in \R^n \ | \ f_1(x) = \dots = f_n(x) = 0\}$ equals $\{p,q\}$ with
$$
p= (H^{-1}, H^{-d}, \dots, H^{-d^{n-2}}, H^{-\frac{1}2d^{n-1}}), \quad \quad
q = (H^{-1}, H^{-d}, \dots, H^{-d^{n-2}}, -H^{-\frac{1}2d^{n-1}})
$$
and the distance between $p$ and $q$ is $2H^{-\frac{1}2d^{n-1}}$. This shows that the double exponential nature of our bound is unavoidable even in the case of different connected components of a single closed semialgebraic set.
\end{example}

\end{document}